\newcommand{\norm}[1]{\ensuremath{\left\| #1 \right\|}}
\newcommand{\bracket}[1]{\ensuremath{\left[ #1 \right]}}
\newcommand{\braces}[1]{\ensuremath{\left\{ #1 \right\}}}
\newcommand{\refeqn}[1]{(\ref{eqn:#1})}
\newcommand{\reffig}[1]{Figure \ref{fig:#1}}
\newcommand{\tr}[1]{\mbox{tr}\ensuremath{\negthickspace\bracket{#1}}}
\newcommand{\trs}[1]{\mathrm{tr}\ensuremath{[#1]}}
\newcommand{\SO}{\ensuremath{\mathsf{SO(3)}}}
\newcommand{\T}{\ensuremath{\mathsf{T}}}
\newcommand{\SE}{\ensuremath{\mathsf{SE(3)}}}
\renewcommand{\Re}{\ensuremath{\mathbb{R}}}
\newcommand{\Sph}{\ensuremath{\mathsf{S}}}
\newcommand{\D}{\ensuremath{\mathbf{D}}}
\newtheorem{prop}{Proposition}
\title{Geometric Adaptive Tracking Control of a Quadrotor UAV on $\SE$ for Agile Maneuvers}
\author{Farhad A. Goodarzi
    \affiliation{
	Ph.D. Candidate\\
	MAE Department\\
	The George Washington University\\
	Washington, DC 20052\\
    Email: fgoodarzi@gwu.edu
    }	
}
\author{Daewon Lee
    \affiliation{ Post Doctoral Fellow\\
	MAE Department\\
	The George Washington University\\
	Washington, DC 20052\\
        Email: daewonlee@gwu.edu
    }
}
\author{Taeyoung Lee
    \affiliation{ Assistant Professor\\
	Flight Dynamics and Control Laboratory\\
	MAE Department\\
	The George Washington University\\
	Washington, DC 20052\\
        Email: tylee@gwu.edu
    }
}
\begin{document}

\maketitle    

\begin{abstract}
{\it This paper presents nonlinear tracking control systems for a quadrotor unmanned aerial vehicle under the influence of uncertainties. Assuming that there exist unstructured disturbances in the translational dynamics and the attitude dynamics, a geometric nonlinear adaptive controller is developed directly on the special Euclidean group. In particular, a new form of an adaptive control term is proposed to guarantee stability while compensating the effects of uncertainties in quadrotor dynamics. A rigorous mathematical stability proof is given. The desirable features are illustrated by numerical example and experimental results of aggressive maneuvers.
}
\end{abstract}

\begin{nomenclature}

\entry{$\vec{e}_{i}\in\Re^3$} {Inertial frame}
\entry{$\vec{b}_{i}\in\Re^3$} {Body-fixed frame}
\entry{$m\in\Re$} {Mass of the quadrotor}
\entry{$J\in\Re^{3\times 3}$} {Inertia matrix of the quadrotor}
\entry{$R\in\Re^{3\times 3}$} {Rotation matrix(body-fixed to inertial frame)}
\entry{$\Omega\in\Re^3$} {Angular velocity with respect to body fixed frame}
\entry{$x\in\Re^3$} {Quadrotor position}
\entry{$v\in\Re^3$} {Quadrotor velocity}
\entry{$f\in\Re$} {Total thrust}
\entry{$M\in\Re^3$} {Total moment}
\entry{$g\in\Re$} {Gravitational acceleration}
\entry{$R_d\in\Re^{3\times 3}$} {Desired rotation matrix}
\entry{$\Omega_d\in\Re^3$} {Desired angular velocity}
\entry{$\Psi\in\Re$} {Attitude error function}
\entry{$\SO$} {Special Orthogonal  group}
\entry{$\SE$} {Special Euclidean group}
\end{nomenclature}

\section{Introduction}

Quadrotor unmanned aerial vehicles (UAVs) are becoming increasingly popular. They offer flight characteristics comparable to traditional helicopters, namely stationary, vertical, and lateral flights in a wide range of speeds, with a much simpler mechanical structure. With their small-diameter rotors driven by electric motors, these multi-rotor platforms are safer to operate than helicopters in indoor environments. Also, they have sufficient payload transporting capability and flight endurance for various missions~\cite{Mahony2012,gooddaewontaeyoungacc14}.  

Several control systems have been proposed for quadrotors. In many cases, disturbances and uncertainties are eliminated in the model for simplicity. There are other limitations of quadrotor control systems, such as complexities in controller structures or lack of stability proof. For example, tracking control of a quadrotor UAV has been considered in~\cite{CabCunPICDC09,MelKumPICRA11}, but the control system in~\cite{CabCunPICDC09} has a complex structure since it is based on a multiple-loop backstepping approach, and no stability proof is presented in~\cite{MelKumPICRA11}. Robust tracking control systems are studied in~\cite{NalMarPICDC09,HuaHamITAC09}, but the quadrotor dynamics is simplified by considering planar motion only~\cite{NalMarPICDC09}, or by ignoring the rotational dynamics by timescale separation assumption~\cite{HuaHamITAC09}. 

In other studies, disturbances and uncertainties have been considered into the dynamics of the quadrotors~\cite{NormanKam2013,HassanFaiz2013,Besnard2007,Bolandi2013}. Several controllers have been designed and presented to eliminate these disturbances such as PID~\cite{Sharma2012}, sliding mode~\cite{Liu2013}, or robust controllers~\cite{Wahyudie2013}. In one example, proportional-derivative controllers are developed with consideration of blade flapping for operations under wind disturbances~\cite{HofHuaAGNCC07}. A backstepping control method is proposed in~\cite{Castillo2006} by considering an aggressive perturbation with bounded signals. These approaches have certain limitations on handling uncertainties. For example, it is well known that sliding mode controller causes chattering problems that may excite high-frequency unmodeled dynamics. Nonlinear robust tracking control systems in~\cite{LeeLeoPACC12,LeeLeoAJC13} guarantee ultimate boundedness of tracking errors only, and they are also prone to chattering if the required ultimate bound is smaller. PID controllers are adopted widely, but it is required that the uncertainties are fixed. 


Due to the limitations mentioned above, adaptive controllers have been developed and are very popular for tracking desired trajectories in existence of disturbances and uncertainties~\cite{dydekAnnnaLav12,Justin2014,Antonelli2013,HZhenchina13}. Although adaptive controllers are very robust for quadrotors trajectory tracking in existence of disturbances and uncertainties, most of the studies are based on linearization~\cite{dydekAnnnaLav12,Justin2014} or simplification~\cite{Antonelli2013,HZhenchina13}. In~\cite{Antonelli2013} only the constant external disturbances is considered into the system dynamics and the stability analysis. In~\cite{HZhenchina13} an adaptive block backstepping controller is presented to stabilize the attitude of a quadrotor, however this method only guarantees the boundedness of errors. A nonlinear adaptive state feedback controller is also presented in~\cite{Silvestre2014}, where the proposed controller only assumes constant known disturbance forces. An adaptive sliding mode controller is developed for under-actuated quadrotor dynamics in~\cite{DWLee2009}. This controller uses slack variables to overcome the under-actuated property of a quadrotor system while simplifying the dynamics to reduce the higher-order derivative terms which makes it very sensitive to the noise. A robust adaptive control of a quadrotor is also presented in~\cite{Bialy2013}, where linear-in-the-parameter uncertainties and bounded disturbances are considered. There is lack of numerical and experimental validations in this study to show the robustness or capability of running aggressive maneuvers using the proposed controller. These simplifications~\cite{Antonelli2013,Silvestre2014}, linearization~\cite{dydekAnnnaLav12,Justin2014}, and assumptions~\cite{HZhenchina13} in the dynamics and controller design process of adaptive controllers restrict the quadrotor to maintain complex or aggressive missions such as a flipping maneuver~\cite{Bialy2013}. 

The other critical issue in designing controllers for quadrotors is that they are mostly based on local coordinates. Some aggressive maneuvers are demonstrated at~\cite{MelMicIJRR12} which are based on Euler angles. Therefore they involve complicated expressions for trigonometric functions, and they exhibit singularities in representing quadrotor attitudes, thereby restricting their ability to achieve complex rotational maneuvers significantly. A quaternion-based feedback controller for attitude stabilization was shown in~\cite{TayMcGITCSTI06}. By considering the Coriolis and gyroscopic torques explicitly, this controller guarantees exponential stability. Quaternions do not have singularities but, as the three-sphere double-covers the special orthogonal group, one attitude may be represented by two antipodal points on the three-sphere. This ambiguity should be carefully resolved in quaternion-based attitude control systems, otherwise they may exhibit unwinding, where a rigid body unnecessarily rotates through a large angle even if the initial attitude error is small~\cite{BhaBerSCL00}. To avoid these, an additional mechanism to lift attitude onto the unit-quaternion space is introduced~\cite{MaySanITAC11}.

Recently, the dynamics of a quadrotor UAV is globally expressed on the special Euclidean group, $\SE$, and nonlinear control systems are developed to track outputs of several flight modes~\cite{LeeLeoPICDC10}. Several aggressive maneuvers of a quadrotor UAV are demonstrated based on a hybrid control architecture, and a nonlinear robust control system is also considered in~\cite{LeeLeoPACC12,Farhad2013}. As they are directly developed on the special Euclidean group, complexities, singularities, and ambiguities associated with minimal attitude representations or quaternions are completely avoided~\cite{ChaSanICSM11}.

This paper is an extension of the prior work of the authors in~\cite{LeeLeoPICDC10,Farhad2013,LeeLeoAJC13}. Geometric nonlinear controllers are developed to follow an attitude tracking command and a position tracking command. In particular, a new form of an adaptive control term is proposed to guarantee asymptotical convergence of tracking error variables when there exist uncertainties at the translational dynamics and the rotational dynamics of quadrotors where the disturbances are considered arbitrary without any simplification. The corresponding stability properties are analyzed mathematically, and it is verified by several experiments. This is significantly in contrast to the existing various experimental results for a quadrotor UAV where control systems are applied \textit{ad hoc} without careful stability analyses. The robustness of the proposed tracking control systems are critical in generating complex maneuvers, as the impact of the several aerodynamic effects resulting from the variation in air speed is significant even at moderate velocities~\cite{HofHuaAGNCC07}.

In short, new contributions and the unique features of the control system proposed in this paper compared with other studies are as follows: (i) it is developed for the full six degrees of freedom dynamic model of a quadrotor UAV on $\SE$, including the coupling effects between the translational dynamics and the rotational dynamics on a nonlinear manifold without any simplification or assumptions, (ii) the control systems are developed directly on the nonlinear configuration manifold in a coordinate-free fashion. This yields remarkably compact expressions for the dynamic model and controllers, compared with local coordinates that often require symbolic computational tools due to complexity of multi-body systems. Thus, singularities of local parameterization are completely avoided to generate agile maneuvers in a uniform way, (iii) a rigorous Lyapunov analysis is presented to establish stability properties without any timescale separation assumption, and (iv) a new form of an adaptive control term is proposed to guarantee asymptotical convergence of tracking error variables when there exist uncertainties at the translational dynamics and the rotational dynamics of quadrotors where the disturbances are considered arbitrary without any simplification, (v) in contrast to hybrid control systems~\cite{GilHofIJRR11}, complicated reachability set analysis is not required to guarantee safe switching between different flight modes, as the region of attraction for each flight mode covers the configuration space almost globally, (vi) the proposed algorithm is validated with experiments for agile maneuvers. To the author's best knowledge, a rigorous mathematical analysis of nonlinear adaptive controllers of a quadrotor UAV on $\SE$ with experimental validations for complex and aggressive maneuvers is unprecedented. 

The paper is organized as follows. We develop a globally defined model for a quadrotor UAV in Section \ref{sec:QDM}. A hybrid control architecture is introduced and an adaptive attitude tracking control system is developed in Section \ref{sec:ACFM}. Section \ref{sec:PCFM} present results for an adaptive position tracking, followed by numerical examples in Section \ref{sec:NE}. Finally, Section \ref{sec:ER} presents two experimental results of aggressive maneuvers. 
\section{QUADROTOR DYNAMICS MODEL}\label{sec:QDM}
Consider a quadrotor UAV model illustrated in \reffig{QM}. We choose an inertial reference frame $\{\vec e_1,\vec e_2,\vec e_3\}$ and a body-fixed frame $\{\vec b_1,\vec b_2,\vec b_3\}$. The origin of the body-fixed frame is located at the center of mass of this vehicle. The first and the second axes of the body-fixed frame, $\vec b_1,\vec b_2$, lie in the plane defined by the centers of the four rotors.

The configuration of this quadrotor UAV is defined by the location of the center of mass and the attitude with respect to the inertial frame. Therefore, the configuration manifold is the special Euclidean group $\SE$, which is the semi-direct product of $\Re^3$ and the special orthogonal group $\SO=\{R\in\Re^{3\times 3}\,|\, R^TR=I,\, \det{R}=1\}$. 

The mass and the inertial matrix of a quadrotor UAV are denoted by $m\in\Re$ and $J\in\Re^{3\times 3}$. Its attitude, angular velocity, position, and velocity are defined by $R\in\SO$, $\Omega,x,v\in\Re^3$, respectively, where the rotation matrix $R$ represents the linear transformation of a vector from the body-fixed frame to the inertial frame and the angular velocity $\Omega$ is represented with respect to the body-fixed frame. The distance between the center of mass to the center of each rotor is $d\in\Re$, and the $i$-th rotor generates a thrust $f_i$ and a reaction torque $\tau_i$ along $-\vec b_3$ for $1\leq i \leq 4$. The magnitude of the total thrust and the total moment in the body-fixed frame are denoted by $f\in\Re$, $M\in\Re^3$, respectively. 
\begin{figure}
\setlength{\unitlength}{0.8\columnwidth}\footnotesize
\centerline{
\begin{picture}(1,0.8)(0,0)
\put(0,0){\includegraphics[width=0.8\columnwidth]{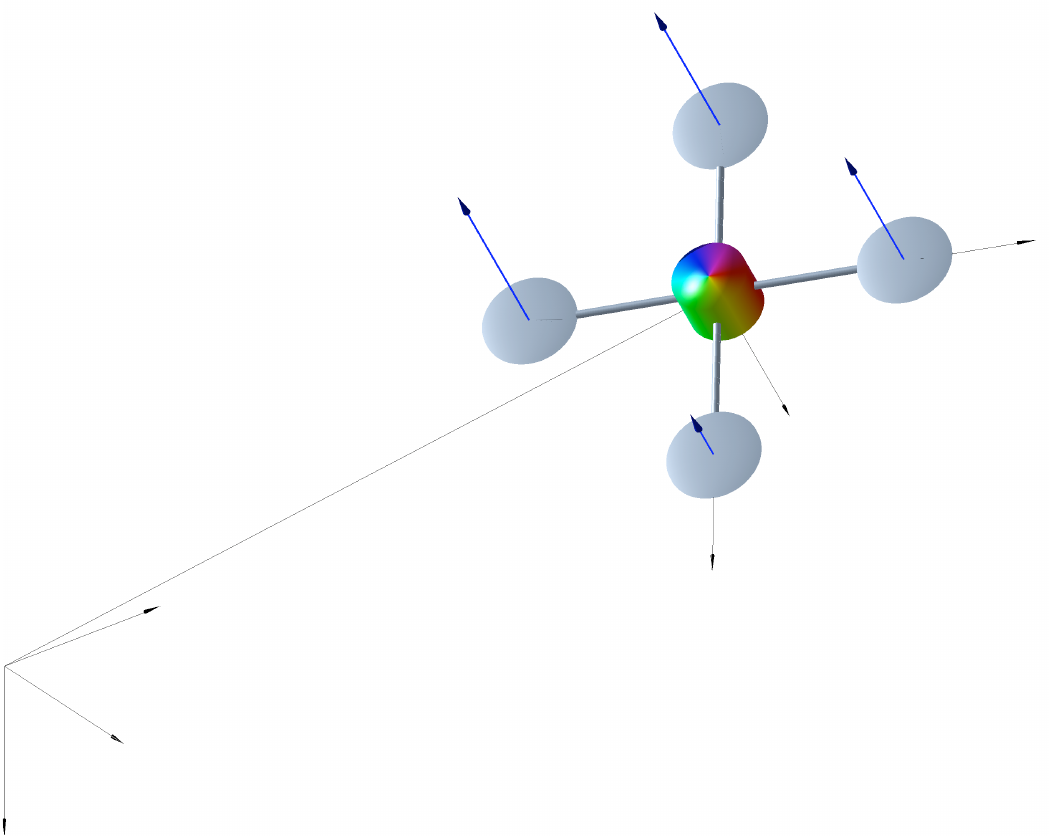}}
\put(0.16,0.18){\shortstack[c]{$\vec e_1$}}
\put(0.13,0.06){\shortstack[c]{$\vec e_2$}}
\put(0.02,0.0){\shortstack[c]{$\vec e_3$}}
\put(0.98,0.5){\shortstack[c]{$\vec b_1$}}
\put(0.70,0.22){\shortstack[c]{$\vec b_2$}}
\put(0.76,0.37){\shortstack[c]{$\vec b_3$}}
\put(0.78,0.66){\shortstack[c]{$f_1$}}
\put(0.56,0.76){\shortstack[c]{$f_2$}}
\put(0.40,0.63){\shortstack[c]{$f_3$}}
\put(0.61,0.42){\shortstack[c]{$f_4$}}
\put(0.30,0.35){\shortstack[c]{$x$}}
\put(0.90,0.35){\shortstack[c]{$R$}}
\end{picture}}
\caption{Quadrotor model}\label{fig:QM}
\end{figure}
The following conventions are assumed for the rotors and propellers, and the thrust and moment that they exert on the quadrotor UAV. We assume that the thrust of each propeller is directly controlled, and the direction of the thrust of each propeller is normal to the quadrotor plane. The first and third propellers are assumed to generate a thrust along the direction of $-\vec b_3$ when rotating clockwise; the second and fourth propellers are assumed to generate a thrust along the same direction of $-\vec b_3$ when rotating counterclockwise. Thus, the thrust magnitude is $f=\sum_{i=1}^4 f_i$, and it is positive when the total thrust vector acts  along $-\vec b_3$, and it is negative when the total thrust vector acts along $\vec b_3$. By the definition of the rotation matrix $R\in\SO$, the direction of the $i$-th body-fixed axis $\vec b_i$ is given by $Re_i$ in the inertial frame, where $e_1=[1;0;0],e_2=[0;1;0],e_3=[0;0;1]\in\Re^3$. Therefore, the total thrust vector is given by $-fRe_3\in\Re^3$ in the inertial frame.

We also assume that the torque generated by each propeller is directly proportional to its thrust. Since it is assumed that the first and the third propellers rotate clockwise and the second and the fourth propellers rotate counterclockwise to generate a positive thrust along the direction of $-\vec b_3$, %
the torque generated by the $i$-th propeller about $\vec b_3$ can be written as $\tau_i=(-1)^{i} c_{\tau f} f_i$  for a fixed constant $c_{\tau f}$.    All of these assumptions are fairly common in many quadrotor control systems~\cite{TayMcGITCSTI06,CasLozICSM05}.  

Under these assumptions, the thrust of each propeller $f_1, f_2, f_3, f_4$ is directly converted into $f$ and $M$, or vice versa. In this paper, the thrust magnitude $f\in\Re$ and the moment vector $M\in\Re^3$ are viewed as control inputs. The corresponding equations of motion are given by
\begin{gather}
\dot x  = v,\label{eqn:EL1}\\
m \dot v = mge_3 - f R e_3 + \mathds{W}_{x}(x,v,R,\Omega)\theta_{x},\label{eqn:EL2}\\
\dot R = R\hat\Omega,\label{eqn:EL3}\\
J\dot \Omega + \Omega\times J\Omega = M + \mathds{W}_{R}(x,v,R,\Omega)\theta_{R},\label{eqn:EL4}
\end{gather}
where the \textit{hat map} $\hat\cdot:\Re^3\rightarrow\SO$ is defined by the condition that $\hat x y=x\times y$ for all $x,y\in\Re^3$. 
More explicitly, for a vector $x=[x_1,x_2,x_3]^{T}\in\Re^3$, the matrix $\hat x$ is given by
\begin{align}
    \hat x = \begin{bmatrix} 0 & -x_3 & x_2\\
                                x_3 & 0 & -x_1\\
                                -x_2 & x_1 & 0 \end{bmatrix}\label{eqn:hat}.
\end{align}
This identifies the Lie algebra $\SO$ with $\Re^3$ using the vector cross product in $\Re^3$. 
The inverse of the hat map is denoted by the \textit{vee} map, $\vee:\SO\rightarrow\Re^3$. The modeling error and uncertainties in the translational dynamics and the rotational dynamics are given by $\mathds{W}_{x}(x,v,R,\Omega)\theta_{x}$, and $\mathds{W}_{R}(x,v,R,\Omega)\theta_{R}$, respectively. Where $\mathds{W}_{x}(x,v,R,\Omega),\mathds{W}_{R}(x,v,R,\Omega)\in\Re^{3\times P}$ are known functions of the state, and $\theta_{x},\theta_{R}\in\Re^{P\times 1}$ are fixed unknown parameters. It is assumed that the bounds of unknown parameters are given by
\begin{align}
\|\mathds{W}_{x}\| \leq B_{W_{x}},\quad \|\theta_{x}\| \leq B_{\theta},\quad \|\theta_{R}\| \leq B_{\theta},
\end{align}
for $B_{W_{x}},B_{\theta}>0$. Throughout this paper, $\lambda_m (A)$ and $\lambda_{M}(A)$ denote the minimum eigenvalue and the maximum eigenvalue of a square matrix $A$, respectively, and $\lambda_m$ and $\lambda_M$ are shorthand for $\lambda_m=\lambda_m(J)$ and $\lambda_M=\lambda_M(J)$. The two-norm of a matrix $A$ is denoted by $\|A\|$.
\section{ATTITUDE CONTROLLED FLIGHT MODE}\label{sec:ACFM}
Since the quadrotor UAV has four inputs, it is possible to achieve asymptotic output tracking for at most four quadrotor UAV outputs.    The quadrotor UAV has three translational and three rotational degrees of freedom; it is not possible to achieve asymptotic output tracking of both attitude and position of the quadrotor UAV. This  motivates us to introduce two flight modes, namely (1) an attitude controlled flight mode, and (2) a position controlled flight mode. While a quadrotor UAV is under-actuated, a complex flight maneuver can be defined by specifying a concatenation of flight modes together with conditions for switching between them. This will be further illustrated by a numerical and experimental examples later. In this section, an attitude controlled flight mode is considered. 
\subsection{Attitude Tracking Errors}
Suppose that an  smooth attitude command $R_d(t)\in\SO$ satisfying the following kinematic equation is given:
\begin{align}
\dot R_d = R_d \hat\Omega_d,
\end{align}
where $\Omega_d(t)$ is the desired angular velocity, which is assumed to be uniformly bounded. We first define errors associated with the attitude dynamics as follows~\cite{BulLew05,LeeITCST13}.
\begin{prop}\label{prop:1}
For a given tracking command $(R_d,\Omega_d)$, and the current attitude and angular velocity $(R,\Omega)$, we define an attitude error function $\Psi:\SO\times\SO\rightarrow\Re$, an attitude error vector $e_R\in\Re^3$, and an angular velocity error vector $e_\Omega\in \Re^3$ as follows~\cite{LeeITCST13}:
\begin{gather}
\Psi (R,R_d) = \frac{1}{2}\tr{I-R_d^TR},\\
e_R =\frac{1}{2} (R_d^TR-R^TR_d)^\vee,\label{eqn:errr}\\
e_\Omega = \Omega - R^T R_d\Omega_d\label{eqn:eomega},
\end{gather}
Then, the following properties hold:
\begin{itemize}
\item[(i)] $\Psi$ is positive-definite about $R=R_d$.
\item[(ii)] The left-trivialized derivative of $\Psi$ is given by
\begin{align}
\T^*_I \mathsf{L}_R\, (\D_R\Psi(R,R_d))= e_R.
\end{align}
\item[(iii)] The critical points of $\Psi$, where $e_R=0$, are $\{R_d\}\cup\{R_d\exp (\pi \hat s),\,s\in\Sph^2 \}$.
\item[(iv)] A lower bound of $\Psi$ is given as follows:
\begin{align}
\frac{1}{2}\|e_R\|^2 \leq \Psi(R,R_d),\label{eqn:PsiLB}
\end{align}
\item[(v)] Let $\psi$ be a positive constant that is strictly less than $2$. If $\Psi(R,R_d)< \psi<2$, then an upper bound of $\Psi$ is given by
\begin{align}
\Psi(R,R_d)\leq \frac{1}{2-\psi} \|e_R\|^2.\label{eqn:PsiUB}
\end{align}
\item[(vi)] The time-derivative of $\Psi$ and $e_R$ satisfies:
\begin{align}
\dot\Psi = e_R\cdot e_\Omega,\quad \|\dot e_R\|\leq\|e_\Omega\|.\label{eqn:Psidot00}
\end{align}
\end{itemize}
\end{prop}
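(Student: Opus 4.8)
The plan is to push everything onto the relative rotation $Q=R_d^TR\in\SO$ and, for the inequalities, onto its rotation angle $\theta\in[0,\pi]$, for which $\tr{Q}=1+2\cos\theta$. Then $\Psi(R,R_d)=\frac12(3-\tr{Q})=1-\cos\theta$, and, writing $Q=\exp(\theta\hat s)$ with $s\in\Sph^2$, Rodrigues' formula gives $Q-Q^T=2\sin\theta\,\hat s$, hence $e_R=\frac12(Q-Q^T)^\vee=\sin\theta\, s$ and $\norm{e_R}=\sin\theta$. With these two facts the static claims are short: (i) $\Psi=1-\cos\theta\ge0$, vanishing only at $\theta=0$, i.e. $Q=I$, i.e. $R=R_d$; (iii) $e_R=0\iff Q=Q^T$, and a symmetric rotation has real spectrum, so it is either $I$ or has eigenvalues $\{1,-1,-1\}$, i.e. $Q=\exp(\pi\hat s)$ with $s$ the unit eigenvector of eigenvalue $1$, which is exactly the stated critical set; (iv) $\frac12\norm{e_R}^2=\frac12\sin^2\theta=\frac12(1-\cos\theta)(1+\cos\theta)\le1-\cos\theta=\Psi$ since $1+\cos\theta\le2$; (v) the hypothesis $\Psi<\psi$ reads $\cos\theta>1-\psi$, so for $\Psi\ne0$ one divides the target inequality by $1-\cos\theta>0$ to reduce it to $2-\psi\le1+\cos\theta$, which is the hypothesis, while $\Psi=0$ is trivial.

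For (ii) I would differentiate along $\epsilon\mapsto R\exp(\epsilon\hat\eta)$, obtaining $\frac{d}{d\epsilon}\big|_0\Psi=-\frac12\tr{R_d^TR\,\hat\eta}$, and then invoke the elementary identity $\tr{A\hat\eta}=-(A-A^T)^\vee\cdot\eta$ (any $A\in\Re^{3\times3}$) to get $\frac12(R_d^TR-R^TR_d)^\vee\cdot\eta=e_R\cdot\eta$. Since this directional derivative is the pairing of $\T^*_I\L_R(\D_R\Psi)$ with $\eta$, the left-trivialized derivative is $e_R$.

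The substantive item is (vi), and its hinge is the kinematic identity $\dot Q=Q\hat{e}_\Omega$. Differentiating $Q=R_d^TR$ with $\dot R_d=R_d\hat\Omega_d$, $\dot R=R\hat\Omega$ gives $\dot Q=Q\hat\Omega-\hat\Omega_d Q$; substituting $\Omega=e_\Omega+Q^T\Omega_d$ and using $\widehat{Q^T\Omega_d}=Q^T\hat\Omega_d Q$ makes the two $\Omega_d$-terms cancel, leaving $\dot Q=Q\hat{e}_\Omega$ --- and it is precisely this cancellation that makes $e_\Omega=\Omega-R^TR_d\Omega_d$, not $\Omega-\Omega_d$, the natural velocity error. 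From here, $\dot\Psi=-\frac12\tr{\dot Q}=-\frac12\tr{Q\hat{e}_\Omega}=\frac12(Q-Q^T)^\vee\cdot e_\Omega=e_R\cdot e_\Omega$, and $\dot e_R=\frac12(\dot Q-\dot Q^T)^\vee=\frac12\bracket{Q\hat{e}_\Omega+\hat{e}_\Omega Q^T}^\vee=\frac12(\tr{Q}I-Q^T)e_\Omega$ using the standard identity $(Q\hat x+\hat x Q^T)^\vee=(\tr{Q}I-Q^T)x$. Finally $\tr{Q}I-Q^T$ is a normal matrix with eigenvalues $2\cos\theta$ and $1+\cos\theta\mp i\sin\theta$, of moduli $2\abs{\cos\theta}\le2$ and $2\abs{\cos(\theta/2)}\le2$; hence $\norm{\tr{Q}I-Q^T}\le2$ and $\norm{\dot e_R}\le\norm{e_\Omega}$.

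I expect the main obstacle to be careful bookkeeping rather than any real difficulty: keeping signs straight across the $\wedge$/$\vee$/$\tr{\cdot}$ identities, deriving the cancellation that yields $\dot Q=Q\hat{e}_\Omega$ (the step on which all of (vi) depends), and checking the spectral bound $\norm{\tr{Q}I-Q^T}\le2$.
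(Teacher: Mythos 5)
Your proposal is correct; note that the paper itself gives no proof of Proposition 1 and simply defers to the cited reference, so you are supplying the details it omits, and you do so along the standard route used there (reduce to $Q=R_d^TR$, use the angle--axis/Rodrigues representation for (i) and (iii)--(v), the trace identity $\trs{A\hat\eta}=-(A-A^T)^\vee\cdot\eta$ for (ii), and the kinematic identity $\dot Q=Q\hat e_\Omega$ for (vi)). All the individual steps check out, including the spectral bound $\norm{\trs{Q}I-Q^T}\le 2$, which is valid because $\trs{Q}I-Q^T$ is normal so its $2$-norm equals its spectral radius.
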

\begin{proof} 
See \cite{LeeITCST13}.
\end{proof}
\subsection{Attitude Tracking Controller}
We now introduce a nonlinear controller for the attitude controlled flight mode:
\begin{align}\label{eqn:aM}
M  =& -k_R e_R -k_\Omega e_\Omega -\mathds{W}_{R}\bar{\theta}_{R}+(R^TR_d\Omega_d)^\wedge J R^T R_d \Omega_d\nonumber \\ 
&+ J R^T R_d\dot\Omega_d,
\end{align}
and adaptive law
\begin{align}\label{eqn:eI}
\dot{\bar{\theta}}_{R}=\gamma_{R}\mathds{W}_{R}^{T}(e_{\Omega}+c_{2}e_{R}),
\end{align}
where $k_R,k_\Omega,c_2,\gamma_{R}$ are positive constants and $\bar{\theta}_{R}\in\Re^{p}$ denotes the estimated value of $\theta_{R}$. The control moment is composed of proportional, derivative, and adaptive terms, augmented with additional terms to cancel out the angular acceleration caused by the desired angular velocity. 
\begin{prop}{(Attitude Controlled Flight Mode)}\label{prop:Att}
Consider the control moment $M$ defined in \refeqn{aM}-\refeqn{eI}. For positive constants $k_R,k_\Omega$, the constants $c_2,B_2$ are chosen such that
\begin{gather}
\|(2J-\trs{J}I)\| \|\Omega_d\| \leq B_2,\label{eqn:B_2}\\
c_2 < \min\bigg\{  \frac{\sqrt{k_R\lambda_m}}{\lambda_M}, \frac{4k_\Omega}{8k_R\lambda_M+(k_\Omega+B_2)^2}\bigg\},\label{eqn:c2}
\end{gather}
the zero equilibrium of tracking errors $(e_R,e_\Omega)=(0,0)$ is stable in the sense of Lyapunov, and $e_R,e_\Omega\rightarrow 0$ as $t\rightarrow\infty$, and furthermore $\tilde{\theta}_{R}$ is uniformly bounded.
\end{prop}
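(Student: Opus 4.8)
The plan is to construct a Lyapunov function on the extended error state $(e_R,e_\Omega,\tilde\theta_R)$, where $\tilde\theta_R$ denotes the parameter estimation error $\bar\theta_R-\theta_R$, and to show that the adaptive law \refeqn{eI} is designed precisely to cancel the sign-indefinite $\tilde\theta_R$-contribution appearing in its time derivative. First I would derive the closed-loop attitude error dynamics. Differentiating \refeqn{eomega} and using $\dot R=R\hat\Omega$, $\dot R_d=R_d\hat\Omega_d$ together with $\hat\Omega_d\Omega_d=0$ gives $J\dot e_\Omega=J\dot\Omega+J\hat\Omega(R^TR_d\Omega_d)-JR^TR_d\dot\Omega_d$. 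Substituting $J\dot\Omega=-\Omega\times J\Omega+M+\mathds{W}_R\theta_R$ and then the control \refeqn{aM}, the two feedforward terms of \refeqn{aM} cancel the matching pieces, the disturbance and its estimate combine into $-\mathds{W}_R\tilde\theta_R$, and writing $\Omega=e_\Omega+R^TR_d\Omega_d$ and collecting terms with the identity $\widehat{Jx}-\hat xJ-J\hat x=\big((2J-\trs{J}I)x\big)^{\wedge}$ (valid for symmetric $J$) leaves
\begin{align}
J\dot e_\Omega =\ & -e_\Omega\times Je_\Omega + \big((2J-\trs{J}I)R^TR_d\Omega_d\big)^{\wedge}e_\Omega \nonumber\\
& -k_Re_R-k_\Omega e_\Omega-\mathds{W}_R\tilde\theta_R. \nonumber
\end{align}
The matrix multiplying $e_\Omega$ is skew-symmetric and, since $\|R^TR_d\Omega_d\|=\|\Omega_d\|$, has norm at most $B_2$ by \refeqn{B_2}.

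Next I would take the Lyapunov candidate
\begin{align}
\mathcal{V} =\ & \tfrac12 e_\Omega\cdot Je_\Omega + k_R\Psi(R,R_d) \nonumber\\
& + c_2\,e_R\cdot Je_\Omega + \tfrac{1}{2\gamma_R}\|\tilde\theta_R\|^2, \nonumber
\end{align}
the weighting of the cross term by $J$ being exactly what makes the adaptive cancellation below come out clean. On a sublevel set of $\mathcal{V}$ contained in $\{\Psi<\psi<2\}$, Proposition~\ref{prop:1}(iv)--(v) bound $\Psi$ below and above by multiples of $\|e_R\|^2$, while $|c_2 e_R\cdot Je_\Omega|\le c_2\lambda_M\|e_R\|\|e_\Omega\|$; hence $\mathcal{V}$ is sandwiched between positive-definite quadratic forms in $(\|e_R\|,\|e_\Omega\|,\|\tilde\theta_R\|)$, and positive-definiteness of the lower bound is precisely the first inequality of \refeqn{c2}. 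Differentiating along the closed loop: $e_\Omega\cdot(e_\Omega\times Je_\Omega)=0$, the pairing of $e_\Omega$ with the skew coupling matrix vanishes, $k_R\dot\Psi=k_Re_R\cdot e_\Omega$ (by \refeqn{Psidot00}) cancels the $-k_Re_R\cdot e_\Omega$ from $e_\Omega\cdot J\dot e_\Omega$, and---the crux---$\tfrac{1}{\gamma_R}\tilde\theta_R\cdot\dot{\bar\theta}_R=(\mathds{W}_R\tilde\theta_R)\cdot(e_\Omega+c_2e_R)$ cancels identically the two contributions $-e_\Omega\cdot\mathds{W}_R\tilde\theta_R$ and $-c_2e_R\cdot\mathds{W}_R\tilde\theta_R$ coming from $e_\Omega\cdot J\dot e_\Omega$ and $c_2e_R\cdot J\dot e_\Omega$. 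What survives depends only on $e_R,e_\Omega$; bounding $\|\dot e_R\|\le\|e_\Omega\|$ by \refeqn{Psidot00}, the coupling matrix by $B_2$, the term $c_2\dot e_R\cdot Je_\Omega$, and the leftover cubic term $-c_2e_R\cdot(e_\Omega\times Je_\Omega)$ on the sublevel set (using $\|e_R\|\le1$), one obtains $\dot{\mathcal{V}}\le-\lambda_m(W_2)(\|e_R\|^2+\|e_\Omega\|^2)$ for a $2\times2$ matrix $W_2$ whose positive-definiteness is the second inequality of \refeqn{c2}.

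Finally, $\dot{\mathcal{V}}\le0$ makes $\mathcal{V}$ nonincreasing, so the chosen sublevel set is invariant---the trajectory stays inside $\{\Psi<\psi\}$, keeping all the above bounds valid---and $e_R,e_\Omega,\tilde\theta_R$ are bounded; in particular $\tilde\theta_R$ is uniformly bounded, and stability in the sense of Lyapunov follows from the positive-definite and decrescent bounds on $\mathcal{V}$. Integrating the inequality for $\dot{\mathcal{V}}$ gives $(e_R,e_\Omega)\in L^2$, and $\dot e_R,\dot e_\Omega$ are bounded (all signals on the right-hand side of the error dynamics being bounded), so $\|e_R\|^2+\|e_\Omega\|^2$ is uniformly continuous and Barbalat's lemma yields $e_R,e_\Omega\to0$; nothing more than boundedness is claimed for $\tilde\theta_R$, as expected in the absence of a persistent-excitation condition.

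The main obstacle is the one already present in the non-adaptive geometric tracking results: dominating the sign-indefinite cross terms in $\dot{\mathcal{V}}$---in particular the cubic term $-c_2e_R\cdot(e_\Omega\times Je_\Omega)$ and the coupling term $c_2\dot e_R\cdot Je_\Omega$---which is what forces both the restriction to a sublevel set inside $\{\Psi<2\}$ and the explicit smallness condition \refeqn{c2} on $c_2$, and which requires careful bookkeeping of the feedforward cancellations in the error dynamics. The genuinely new ingredient is recognizing that the cross term in $\mathcal{V}$ must be weighted by $J$ so that, upon differentiation, the $\mathds{W}_R\tilde\theta_R$ terms cancel exactly against the adaptive law \refeqn{eI} with no residual left to absorb.
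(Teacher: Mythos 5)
Your proposal follows essentially the same route as the paper's proof in Appendix A: the same error dynamics $J\dot e_\Omega=\{Je_\Omega+d\}^\wedge e_\Omega-k_Re_R-k_\Omega e_\Omega\pm\mathds{W}_R\tilde\theta_R$, the same Lyapunov function $\frac12 e_\Omega\cdot Je_\Omega+k_R\Psi+c_2e_R\cdot Je_\Omega+\frac{1}{2\gamma_R}\|\tilde\theta_R\|^2$, the same exact cancellation of the $\tilde\theta_R$ terms by the adaptive law \refeqn{eI}, and the same bounds leading to $\dot{\mathcal V}\le-z_2^TW_2z_2$ with $W_2$ as in \refeqn{W2}. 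Your explicit Barbalat's-lemma closing argument is a correct completion of the asymptotic-convergence claim that the paper leaves implicit.
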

\begin{proof}
See Appendix \ref{sec:pfAtt}.
\end{proof}

While these results are developed for the attitude dynamics of a quadrotor UAV, they can be applied to the attitude dynamics of any rigid body. Nonlinear Adaptive controllers have been developed for attitude stabilization in terms of modified Rodriguez parameters~\cite{SubJAS04} and quaternions~\cite{SubAkeJGCD04}, and for attitude tracking in terms of Euler-angles~\cite{ShoJuaPACC02}. The proposed tracking control system is developed on $\SO$, therefore it avoids singularities of Euler-angles and Rodriguez parameters, as well as unwinding of quaternions. 

Asymptotic tracking of the quadrotor attitude does not require specification of the thrust magnitude. As an auxiliary problem, the thrust magnitude can be chosen in many different ways to achieve an additional translational motion objective. For example, it can be used to asymptotically track a quadrotor altitude command~\cite{LeeLeoAJC13}. Since the translational motion of the quadrotor UAV can only be partially controlled; this flight mode is most suitable for short time periods where an attitude maneuver is to be completed. 
\section{POSITION CONTROLLED FLIGHT MODE}\label{sec:PCFM}
We introduce a nonlinear controller for the position controlled flight mode in this section.
\subsection{Position Tracking Errors}
Suppose that an arbitrary smooth position tracking command $x_d(t) \in \Re^3$ is given. The position tracking errors for the position and the velocity are given by:
\begin{align}
e_x  = x - x_d,\quad
e_v  = \dot e_x = v - \dot x_d.
\end{align}
In the position controlled tracking mode, the attitude dynamics is controlled to follow the computed attitude $R_c(t)\in\SO$ and the computed angular velocity $\Omega_c(t)$ defined as
\begin{align}
R_c=[ b_{1_c};\, b_{3_c}\times b_{1_c};\, b_{3_c}],\quad \hat\Omega_c = R_c^T \dot R_c\label{eqn:RdWc},
\end{align}
where $b_{3_c}\in\Sph^2$ is given by
\begin{align}
 b_{3_c} = -\frac{-k_x e_x - k_v e_v -\mathds{W}_{x}\bar{\theta}_{x} - mg e_3 +m\ddot x_d}{\norm{-k_x e_x - k_v e_v -\mathds{W}_{x}\bar{\theta}_{x}- mg e_3 + m\ddot x_d}},\label{eqn:Rd3}
\end{align}
for positive constants $k_x,k_v$ and $\bar{\theta}_{x}\in\Re^{p}$ denotes the estimate value of $\theta_{x}$. The unit vector $b_{1_c}\in\Sph^2$ is selected to be orthogonal to $b_{3_c}$, thereby guaranteeing that $R_c\in\SO$. It can be chosen to specify the desired heading direction, and the detailed procedure to select $b_{1c}$ is described later at Section \ref{sec:b1c}. 
Following the prior definition of the attitude error and the angular velocity error given at \refeqn{errr} and \refeqn{eomega}, and assume that the commanded acceleration is uniformly bounded:
\begin{align}
\|-mge_3+m\ddot x_d\| < B_1\label{eqn:B}
\end{align}
for a given positive constant $B_1$. These imply that the given desired position command is distinctive from free-fall, where no control input is required.
\subsection{Position Tracking Controller}
The nonlinear controller for the position controlled flight mode, described by control expressions for the  thrust magnitude and the  moment vector, are:
\begin{align}
f  =& ( k_x e_x + k_v e_v +\mathds{W}_{x}\bar{\theta}_{x}+ mg e_3-m\ddot x_d)\cdot Re_3,\label{eqn:f}\\
M  =& -k_R e_R -k_\Omega e_\Omega -\mathds{W}_{R}\bar{\theta}_{R}+(R^TR_c\Omega_c)^\wedge J R^T R_c \Omega_c \nonumber\\
&+ J R^T R_c\dot\Omega_c.\label{eqn:M}
\end{align}
Similar with \refeqn{eI}, an adaptive control law for the position tracking controller is defined as
\begin{align}\label{eqn:adaptivelawx}
\dot{\bar{\theta}}_{x}&=\begin{cases}
\gamma_{x}\mathds{W}_{x}^{T}(e_{v}+c_{1}e_{x}) & \hspace{-0.4cm}\mbox{if } \|\bar{\theta}_{x}\|<B_{\theta}\\
& \hspace{-0.4cm}\mbox{or } \|\bar{\theta}_{x}\|=B_{\theta}\\
& \hspace{-0.4cm}\mbox{and } \bar{\theta}_{x}^{T}\mathds{W}_{x}^{T}(e_{v}+c_{1}e_{x})\leq 0\\
&\\
\gamma_{x}(I-\frac{\bar{\theta}_{x}\bar{\theta}_{x}^{T}}{\bar{\theta}_{x}^{T}\bar{\theta}_{x}})\mathds{W}_{x}^{T}(e_{v}+c_{1}e_{x}) & \mbox{Otherwise}
\end{cases},
\end{align}
for a positive constants $c_1$ and $\gamma_{x}$. The above expression correspond adaptive controls with projection~\cite{Ioannou96} and it is included to restrict the effects of attitude tracking errors on the translational dynamics.

The nonlinear controller given by equations \refeqn{f}, \refeqn{M} can be given a backstepping interpretation. The computed attitude $R_c$ given in equation \refeqn{RdWc} is selected so that the thrust axis $-b_3$ of the quadrotor UAV tracks the computed direction given by $-b_{3_c}$ in \refeqn{Rd3}, which is a direction of the thrust vector that achieves position tracking. The moment expression \refeqn{M} causes the attitude of the quadrotor UAV to asymptotically track $R_c$ and the thrust magnitude expression \refeqn{f} achieves asymptotic position tracking. 
\begin{prop}{(Position Controlled Flight Mode)}\label{prop:Pos}
Suppose that the initial conditions satisfy
\begin{align}\label{eqn:Psi0}
\Psi(R(0),R_c(0)) < \psi_1 < 1,
\end{align}
for positive constant $\psi_1$. Consider the control inputs $f,M$ defined in \refeqn{f}-\refeqn{M}.
For positive constants $k_x,k_v$, we choose positive constants $c_1,c_2,k_R,k_\Omega$ such that
\begin{gather}
c_1 < \min\braces{\frac{4k_xk_v(1-\alpha)^2}{k_v^2(1+\alpha)^2+4m k_x(1-\alpha)},\; \sqrt{\frac{k_x}{m}} },\label{eqn:c1b}\\
\lambda_{m}(W_2) > \frac{\|W_{12}\|^2}{4\lambda_{m}(W_1)},\label{eqn:kRkWb}
\end{gather}
and \refeqn{c2} is satisfied, where $\alpha=\sqrt{\psi_1(2-\psi_1)}$, and the matrices $W_1,W_{12},W_2\in\Re^{2\times 2}$ are given by
\begin{align}
W_1 &= \begin{bmatrix} {c_1k_x}(1-\alpha) & -\frac{c_1k_v}{2}(1+\alpha)\\
-\frac{c_1k_v}{2}(1+\alpha) & k_v(1-\alpha)-mc_1\end{bmatrix},\label{eqn:W1}\\
W_{12}&=\begin{bmatrix}
{c_1}(B_{W_{x}} B_{\theta}+B_1) & 0 \\ B_{W_{x}} B_{\theta}+B_1+k_xe_{x_{\max}} & 0\end{bmatrix},\label{eqn:W12}\\
W_2 & = \begin{bmatrix} c_2k_R & -\frac{c_2}{2}(k_\Omega+B_2) \\ 
-\frac{c_2}{2}(k_\Omega+B_2) & k_\Omega-2c_2\lambda_M \end{bmatrix}.\label{eqn:W2}
\end{align}
This implies that the zero equilibrium of the tracking error is stable in the sense of Lyapunov and the tracking error variables asymptotically converge to zero. Also, the estimation errors are uniformly bounded.
\end{prop}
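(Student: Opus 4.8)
The plan is to run the standard composite-Lyapunov argument for geometric tracking on \SE, treating the translational error $(e_x,e_v)$, the attitude error $(e_R,e_\Omega)$, and the parameter errors $\tilde\theta_x=\bar\theta_x-\theta_x$, $\tilde\theta_R=\bar\theta_R-\theta_R$ simultaneously, and exhibiting one function that is non-increasing along the closed loop. First I would compute the closed-loop error dynamics. Substituting $f$ from \refeqn{f} into \refeqn{EL2}, and writing $b_{3_c}=R_ce_3$ from \refeqn{Rd3}--\refeqn{RdWc}, one gets
\begin{align*}
m\dot e_v = -k_xe_x-k_ve_v-\mathds{W}_x\tilde\theta_x+X,
\end{align*}
where $X$ is the thrust-direction mismatch between $Re_3$ and $b_{3_c}$. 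The bound \refeqn{B} keeps the denominator in \refeqn{Rd3} away from zero, so $b_{3_c}$ and $R_c$ are well defined, and a short geometric computation gives $\|X\|\le(k_x\|e_x\|+k_v\|e_v\|+B_{W_x}B_\theta+B_1)\,\|e_R\|$, using $\|\bar\theta_x\|\le B_\theta$ from the projection in \refeqn{adaptivelawx}. For the rotational part, substituting $M$ from \refeqn{M} into \refeqn{EL4} yields, exactly as in Proposition \ref{prop:Att}, an error equation $J\dot e_\Omega=-k_Re_R-k_\Omega e_\Omega-\mathds{W}_R\tilde\theta_R+\Delta$ in which the desired-angular-velocity feedforward mismatch $\Delta$ contributes to the Lyapunov rate only a cross term bounded by $B_2\|e_R\|\|e_\Omega\|$ with $B_2$ from \refeqn{B_2}, once the computed $\Omega_c,\dot\Omega_c$ are shown bounded (which follows from \refeqn{Psi0} and \refeqn{B} together with the boundedness of $e_x,e_v$ established below).

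Next I would take
\begin{align*}
\mathcal V=\;&\tfrac12 k_x\|e_x\|^2+\tfrac12 m\|e_v\|^2+c_1m\,e_x\!\cdot\! e_v+k_R\Psi(R,R_c)\\
&+\tfrac12 e_\Omega\!\cdot\! Je_\Omega+c_2\,e_R\!\cdot\! Je_\Omega+\tfrac1{2\gamma_x}\|\tilde\theta_x\|^2+\tfrac1{2\gamma_R}\|\tilde\theta_R\|^2,
\end{align*}
and show it is positive definite. On the set $\Psi(R,R_c)<\psi_1$ the bounds \refeqn{PsiLB}--\refeqn{PsiUB} (with $\psi=\psi_1<1$) sandwich $k_R\Psi$ between multiples of $\|e_R\|^2$; $c_1<\sqrt{k_x/m}$ from \refeqn{c1b} dominates the $e_x\!\cdot\!e_v$ cross term and $c_2<\sqrt{k_R\lambda_m}/\lambda_M$ from \refeqn{c2} dominates the $e_R\!\cdot\!Je_\Omega$ term, so $\mathcal V$ is bounded above and below by positive quadratic forms in $(\|e_x\|,\|e_v\|,\|e_R\|,\|e_\Omega\|,\|\tilde\theta_x\|,\|\tilde\theta_R\|)$.

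Then I would differentiate $\mathcal V$ along the closed loop. The terms linear in the parameter errors are $-\tilde\theta_x^T\mathds{W}_x^T(e_v+c_1e_x)-\tilde\theta_R^T\mathds{W}_R^T(e_\Omega+c_2e_R)+\tfrac1{\gamma_x}\tilde\theta_x^T\dot{\bar\theta}_x+\tfrac1{\gamma_R}\tilde\theta_R^T\dot{\bar\theta}_R$; the adaptive law \refeqn{eI} and the first branch of \refeqn{adaptivelawx} cancel these exactly, and on the projection branch the residue equals $-\|\bar\theta_x\|^{-2}(\tilde\theta_x^T\bar\theta_x)(\bar\theta_x^T\mathds{W}_x^T(e_v+c_1e_x))\le0$, because there $\|\bar\theta_x\|=B_\theta$, $\bar\theta_x^T\mathds{W}_x^T(e_v+c_1e_x)>0$, and $\tilde\theta_x^T\bar\theta_x=B_\theta^2-\theta_x^T\bar\theta_x\ge B_\theta^2-\|\theta_x\|B_\theta\ge0$ since $\|\theta_x\|\le B_\theta$. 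Hence, with $z_1=(\|e_x\|,\|e_v\|)^T$ and $z_2=(\|e_R\|,\|e_\Omega\|)^T$, apportioning the $\|X\|$-terms between the diagonal of $W_1$ (using $\|e_x\|\le e_{x_{\max}}$ on the sublevel set) and $W_{12}$,
\begin{align*}
\dot{\mathcal V}\le-z_1^TW_1z_1+\|W_{12}\|\,\|z_1\|\,\|z_2\|-z_2^TW_2z_2,
\end{align*}
with $W_1,W_{12},W_2$ as in \refeqn{W1}--\refeqn{W2} (the $\alpha$'s arising from $\Psi<\psi_1$ via \refeqn{PsiLB}--\refeqn{PsiUB}). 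Condition \refeqn{c1b} makes $W_1\succ0$, \refeqn{c2} makes $W_2\succ0$, and the Schur-type inequality \refeqn{kRkWb} makes the block form negative definite, so $\dot{\mathcal V}\le-\lambda(\|z_1\|^2+\|z_2\|^2)$ for some $\lambda>0$ --- negative semidefinite in the full state, since $\tilde\theta_x,\tilde\theta_R$ do not appear in the bound.

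Finally I would collect the conclusions. $\mathcal V$ is non-increasing, hence bounded, so $e_x,e_v,e_R,e_\Omega,\tilde\theta_x,\tilde\theta_R$ are uniformly bounded ($\tilde\theta_x$ also directly from the projection), which gives Lyapunov stability and uniform boundedness of the estimation errors. A sublevel-set argument shows the hypothesis $\Psi(R,R_c)<\psi_1$ is invariant --- it cannot be first violated, since $\mathcal V$ would have had to increase --- which retroactively validates every estimate above and the boundedness of $\Omega_c,\dot\Omega_c$. Integrating $\dot{\mathcal V}\le-\lambda\|z\|^2$ gives $z_1,z_2\in L^2$; all signals being bounded makes $\dot z_1,\dot z_2$ bounded, so $z_1,z_2$ are uniformly continuous and Barbalat's lemma yields $(e_x,e_v,e_R,e_\Omega)\to0$. \textbf{Main obstacle:} the careful bounding of the coupling term $X$ --- propagating the constraint $\Psi<\psi_1$ through the whole argument and splitting the $\|A\|\|e_R\|$-type products between $W_1$ and $W_{12}$ so that precisely \refeqn{c1b} and \refeqn{kRkWb} emerge --- together with verifying that the projected adaptation cannot flip the sign of $\dot{\mathcal V}$ and that the computed $\Omega_c,\dot\Omega_c$ entering the attitude error dynamics remain bounded.
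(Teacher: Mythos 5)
Your proposal follows essentially the same route as the paper's proof: the same composite Lyapunov function $\mathcal V=\mathcal V_1+\mathcal V_2$ with the cross terms $c_1 m\,e_x\cdot e_v$ and $c_2\,e_R\cdot Je_\Omega$ and the parameter-error quadratics, the same bound $\|X\|\le(k_x\|e_x\|+k_v\|e_v\|+B_{W_x}B_\theta+B_1)\|e_R\|$ on the thrust-direction mismatch restricted to the domain $\Psi<\psi_1$, $\|e_x\|<e_{x_{\max}}$, the same sign argument for the projection branch of the adaptation law, and the same assembly into $\dot{\mathcal V}\le-z^TWz$ with conditions \refeqn{c1b}, \refeqn{c2}, \refeqn{kRkWb} ensuring positive definiteness (your sign convention for $\tilde\theta$ is opposite to the paper's, which is immaterial). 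The argument is correct and matches the paper's; your explicit invocation of Barbalat's lemma and the sublevel-set invariance of $\Psi<\psi_1$ simply spells out steps the paper leaves implicit.
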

\begin{proof}
See Appendix \ref{sec:pfPos}.
\end{proof}
This proposition shows that the proposed control system is robust to unstructured uncertainties in the dynamics of a quadrotor UAV, and in the presence of uncertainties, the tracking error variables still converge to zero. 

Proposition \ref{prop:Pos} requires that the initial attitude error is less than $90^\circ$ in \refeqn{Psi0}. Suppose that this is not satisfied, i.e. $1\leq\Psi(R(0),R_c(0))<2$. We can still apply Proposition \ref{prop:Att}, which states that the attitude error is asymptotically decreases to zero for almost all cases, and it satisfies \refeqn{Psi0} in a finite time. Therefore, by combining the results of Proposition \ref{prop:Att} and \ref{prop:Pos}, we can show attractiveness of the tracking errors when $\Psi(R(0),R_c(0))<2$.

\begin{prop}{(Position Controlled Flight Mode with a Larger Initial Attitude Error)}\label{prop:Pos2}
Suppose that the initial conditions satisfy
\begin{gather}
1\leq \Psi(R(0),R_c(0)) < 2\label{eqn:eRb3},\quad \|e_x(0)\| < e_{x_{\max}},
\end{gather}
for a constant $e_{x_{\max}}$. Consider the control inputs $f,M$ defined in \refeqn{f}-\refeqn{M}, where the control parameters satisfy \refeqn{Psi0}-\refeqn{kRkWb} for a positive constant $\psi_1<1$. Then the zero equilibrium of the tracking errors is attractive, i.e., $e_x,e_v,e_R,e_\Omega\rightarrow 0$ as $t\rightarrow\infty$. 
\end{prop}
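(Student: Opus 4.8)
The plan is to establish the result by splicing together Propositions~\ref{prop:Att} and~\ref{prop:Pos} through a two-phase (switching) argument. In the first phase, while $1\le\Psi(R(t),R_c(t))<2$, observe that the moment law~\refeqn{M} together with the adaptive law~\refeqn{eI} renders the attitude error dynamics in $(e_R,e_\Omega,\tilde\theta_R)$, with $\tilde\theta_R=\bar\theta_R-\theta_R$, identical in form to the closed loop analyzed in Proposition~\ref{prop:Att}, the only difference being that the commanded pair $(R_d,\Omega_d)$ is replaced by the computed pair $(R_c,\Omega_c)$ of~\refeqn{RdWc}--\refeqn{Rd3}. Hence, as long as $R_c(t)$ is a well-defined $C^2$ trajectory with $\Omega_c,\dot\Omega_c$ uniformly bounded, Proposition~\ref{prop:Att} applies: the attitude Lyapunov function $\mathcal V_2=\tfrac12 e_\Omega\cdot Je_\Omega + k_R\Psi(R,R_c) + c_2 e_R\cdot Je_\Omega + \tfrac1{2\gamma_R}\|\tilde\theta_R\|^2$ is nonincreasing, $\tilde\theta_R$ remains bounded, and $e_R,e_\Omega\to0$. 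For all initial data outside the measure-zero set on which $R$ approaches the critical set $\{R_c\exp(\pi\hat s)\}$ of Proposition~\ref{prop:1}(iii), this forces $\Psi(R(t),R_c(t))\to0$, so there is a finite time $T^*$ with $\Psi(R(T^*),R_c(T^*))=\psi_1<1$.

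To justify the hypotheses used above, I would first rule out finite-time escape so that $(R_c,\Omega_c)$ is indeed well-defined and bounded. The projection in~\refeqn{adaptivelawx} keeps $\|\bar\theta_x\|\le B_\theta$, hence $\|F\|:=\|k_xe_x+k_ve_v+\mathds{W}_x\bar\theta_x+mge_3-m\ddot x_d\|\le k_x\|e_x\|+k_v\|e_v\|+B_{W_x}B_\theta+B_1$, and by the ``distinct from free-fall'' condition~\refeqn{B} it is bounded away from zero, so $b_{3_c}=F/\|F\|$ and thus $R_c$ are smooth. Rewriting the translational error dynamics as $m\dot e_v=-k_xe_x-k_ve_v-\mathds{W}_x\tilde\theta_x+X$ with $X=\|F\|\bigl(b_{3_c}-(b_{3_c}\cdot Re_3)Re_3\bigr)$, so that $\|X\|=\|F\|\sqrt{1-(b_{3_c}\cdot Re_3)^2}\le\|F\|$ and $\|\mathds{W}_x\tilde\theta_x\|\le2B_{W_x}B_\theta$, a Gronwall estimate shows $\|e_x\|,\|e_v\|$ stay finite on every finite interval. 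Since $R\in\SO$ is compact and, once $\Omega_c,\dot\Omega_c$ are seen to be smooth functions of the (now-bounded) states together with $x_d$ and its derivatives, $\mathcal V_2$ is nonincreasing and hence $e_\Omega,\bar\theta_R$ are bounded, the solution exists for all $t\ge0$; Proposition~\ref{prop:Att} then applies and the finite switching time $T^*$ is obtained.

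In the second phase, $t\ge T^*$, I would restart the time at $T^*$. Because $\Psi(R(T^*),R_c(T^*))=\psi_1<1$, the initial-attitude condition~\refeqn{Psi0} holds with $T^*$ in place of $0$, and~\refeqn{c1b},~\refeqn{kRkWb},~\refeqn{c2} hold by assumption, so Proposition~\ref{prop:Pos} yields $e_x,e_v,e_R,e_\Omega\to0$ with bounded estimation errors, which is the claim. The main obstacle is the bookkeeping at the switch: Proposition~\ref{prop:Pos} is invoked with the state at $T^*$ as initial data, so one must verify its implicit smallness requirement --- in particular $\|e_x(T^*)\|<e_{x_{\max}}$, the constant entering $W_{12}$ in~\refeqn{W12}. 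This does not follow from $\|e_x(0)\|<e_{x_{\max}}$ alone, because the coupling term $X$ can transiently destabilize the translational error while $\Psi(R,R_c)$ is large. The delicate point is therefore to bound the growth of $\|e_x\|$ over $[0,T^*)$, exploiting that $\|X\|=\|F\|\sqrt{1-(b_{3_c}\cdot Re_3)^2}$ becomes small as $\Psi(R,R_c)\to0$ so that the translational error subsystem is input-to-state stable with respect to a decaying perturbation, and to choose the controller constants (hence $e_{x_{\max}}$) accordingly. Once this transient bound is secured, the two phases splice and attractiveness of $(e_x,e_v,e_R,e_\Omega)=(0,0,0,0)$ follows.
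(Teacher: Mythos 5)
Your proposal follows essentially the same route as the paper: invoke Proposition~\ref{prop:Att} to drive $\Psi(R,R_c)$ below $\psi_1$ in finite time $t^*$, then apply Proposition~\ref{prop:Pos} from $t^*$ onward, with the remaining work being to bound $z_1=[\|e_x\|,\|e_v\|]^T$ on $[0,t^*]$. The paper defers that transient bound to a citation, whereas you sketch it via a Gronwall/ISS estimate and correctly flag the need to verify $\|e_x(t^*)\|<e_{x_{\max}}$ at the switch; this is the same argument, spelled out in more detail.
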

\begin{proof}
See Appendix \ref{sec:pfPos2}.
\end{proof}

Linear or nonlinear PID and adaptive controllers have been widely used for a quadrotor UAV. But, they have been applied in an \textit{ad-hoc} manner without stability analysis. This paper provides a new form of nonlinear adaptive controller on $\SE$ that guarantees almost global attractiveness in the presence of uncertainties. The nonlinear robust tracking control system in~\cite{LeeLeoPACC12,LeeLeoAJC13} provides ultimate boundedness of tracking errors, and the control input may be prone to chattering if the required ultimate bound is smaller. Compared with~\cite{LeeLeoAJC13}, the control system in this paper guarantees stronger asymptotic stability, and there is no concern for chattering. 

\subsection{Direction of the First Body-Fixed Axis}\label{sec:b1c}
As described above, the construction of the orthogonal matrix $R_c$ involves having its third column $b_{3_c}$ specified by \refeqn{Rd3}, and its first column $b_{1_c}$ is arbitrarily chosen to be orthogonal to the third column, which corresponds to a one-dimensional degree of choice. 

By choosing $b_{1_c}$ properly, we constrain the asymptotic direction of the first body-fixed axis. Here, we propose to specify the \textit{projection} of the first body-fixed axis onto the plane normal to $b_{3_c}$. In particular, we choose a desired direction $b_{1_d}\in\Sph^2$, that is not parallel to $b_{3_c}$, and $b_{1_c}$ is selected as $b_{1_c}=\mathrm{Proj}[b_{1_d}]$, where $\mathrm{Proj}[\cdot]$ denotes the normalized projection onto the plane perpendicular to $b_{3_c}$. In this case, the first body-fixed axis does not converge to $b_{1_d}$, but it converges to the projection of $b_{1_d}$, i.e. $b_1\rightarrow b_{1_c}=\mathrm{Proj}[b_{1_d}]$ as $t\rightarrow\infty$. This can be used to specify the heading direction of a quadrotor UAV in the horizontal plane~\cite{LeeLeoAJC13}.
\section{NUMERICAL EXAMPLE}\label{sec:NE}
 An aggressive flipping maneuver is considered for a numerical simulation to validate the proposed controller and to show versatile adaptability of the controller due to its large region of attraction. The quadrotor parameters are chosen as
\begin{align*}
J =
\begin{bmatrix}
    5.5711 & 0.0618 & -0.0251\\
    0.06177 & 5.5757 & 0.0101\\
    -0.02502 & 0.01007 & 1.05053
\end{bmatrix}\times 10^{-2}\quad\mathrm{kgm}^2 ,\\ m = 0.755\mathrm{kg},\;\;
d = 0.169\mathrm{m},\;\; c_{\tau f} = 0.1056.
 \end{align*}
Also, controller parameters are selected as follows: $k_x=6.0$, $k_v=3.0$, $k_R=0.7$, $k_\Omega = 0.12$, $c_1=0.1$, $c_2=0.1$.

In this simulation, initial state of the quadrotor UAV is at a hovering condition: $x(0)=v(0)=\Omega(0)=0_{3\times 1}$, and $R(0)=I_{3\times 3}$. The desired trajectory is a flipping maneuver where the quadrotor rotates about rotation axis $e_r = [\frac{\sqrt{2}}{2}, \frac{\sqrt{2}}{2}, 0]$ by $360^\circ$. This is a complex maneuver combining a nontrivial pitching maneuver with a yawing motion. It is achieved by concatenating the following two control modes:
\begin{itemize}
\item[(i)] Attitude tracking to rotate the quadrotor ($t\leq 0.375$)
\begin{align*}
&R_d(t)= I+\sin(4 \pi t)\hat{e}_r+(1-\cos(4 \pi t))(e_r e_r^T-I),\\
&\Omega_d= 4\pi\cdot e_r.
\end{align*}
\item[(ii)] Trajectory tracking to make it hover after completing the preceding rotation ($0.375<t \leq 2$)
\begin{align*}
x_d(t)=[0,0,0]^T,\quad b_{1_d} = [1,0,0]^T.
\end{align*}
\end{itemize}

We considered two cases for this numerical simulation to compare the effect of the proposed adaptive term in the presence of disturbances. In this numerical simulation we considered a special case of $W_{x}=I_{3\times 3}$ and $W_{R}=I_{3\times 3}$. 
Two cases are as follows: (i) with adaptive term and (ii) without the adaptive term, where constant disturbances are defined as
\begin{align*}
\theta_R=[0.03,-0.06,0.09]^T,\quad \theta_x=[0.25,0.125,0.2]^T
\end{align*}

The switching time is determined as follows. The desired angular velocity of the first attitude tracking mode is $4\pi$, which means it requires $0.5\;\mathrm{sec}$ for one revolution but the control mode is switched to trajectory tracking mode at $t=0.375\;\mathrm{sec}$ to compensate rotational inertia.

It is also assumed that the maximum thrust at each rotor is given by $f_{i_{\max}}=3.2\,\mathrm{N}$, and any thrust command above the maximum thrust is saturated to represent the actual motor limitation in the numerical simulation.

The attitude error functions are depicted at Figures \ref{fig:sim_error_WO}, \ref{fig:sim_error_W}, and they show that errors jump at $0.375\;\mathrm{sec}$ when the controller is switched, but it converges exponentially to zero according to Proposition 3. Figures \ref{fig:sim_force_WO}, \ref{fig:sim_force_W} illustrate saturated thrust force. 

Comparison between these two cases, as illustrated in Figure \ref{fig:sim_WO} and \ref{fig:sim_W}, shows that the adaptive term eliminates the steady state error, reduces the attitude error variables, and reduces the drop in altitude during the maneuver significantly.

\begin{figure}
\centerline{
	\subfigure[Attitude error function $\Psi$]{
		\includegraphics[width=0.33\columnwidth]{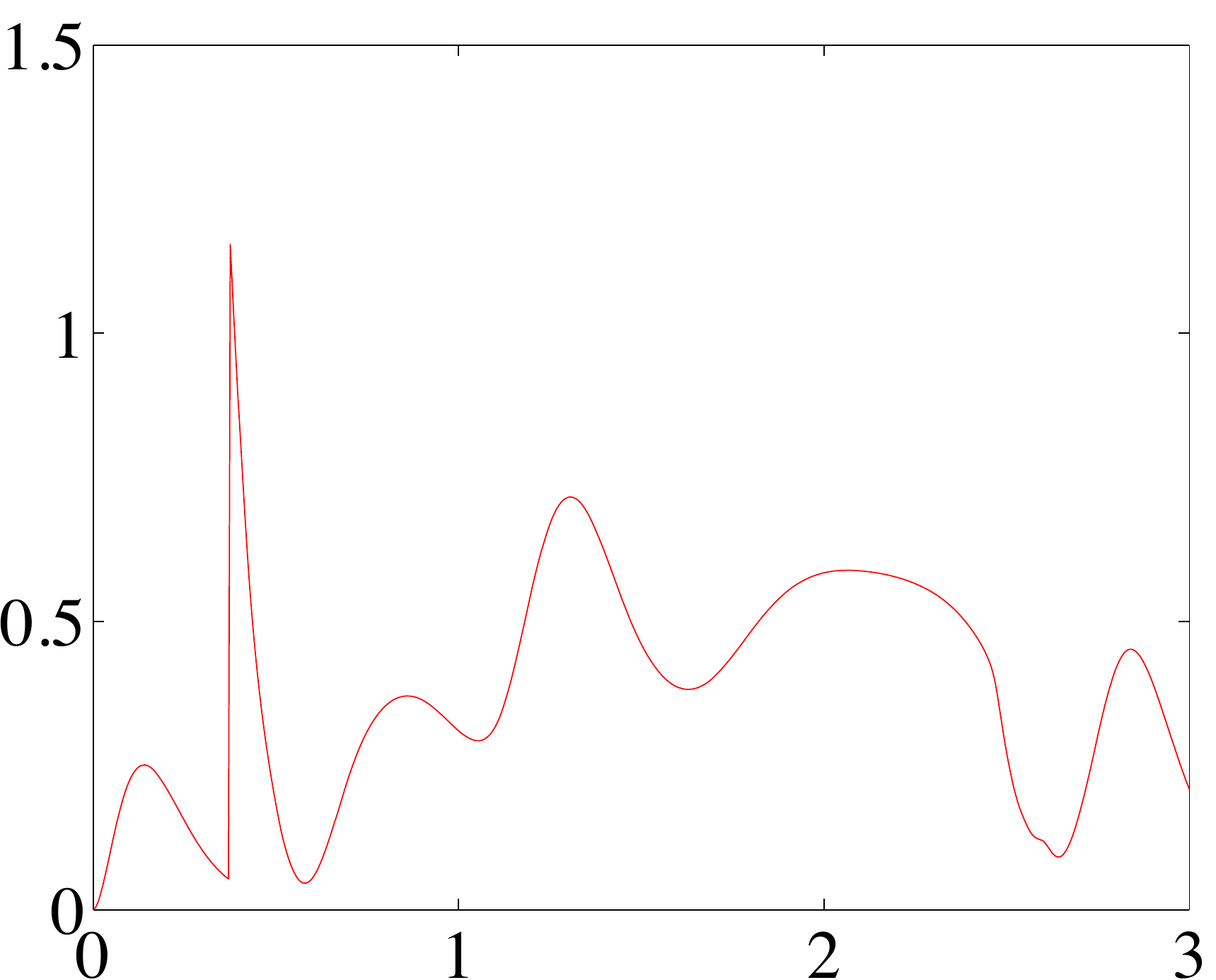}\label{fig:sim_error_WO}}
	\subfigure[Thrust at each rotor $f_i$ ($\mathrm{N}$)]{
		\includegraphics[width=0.34\columnwidth]{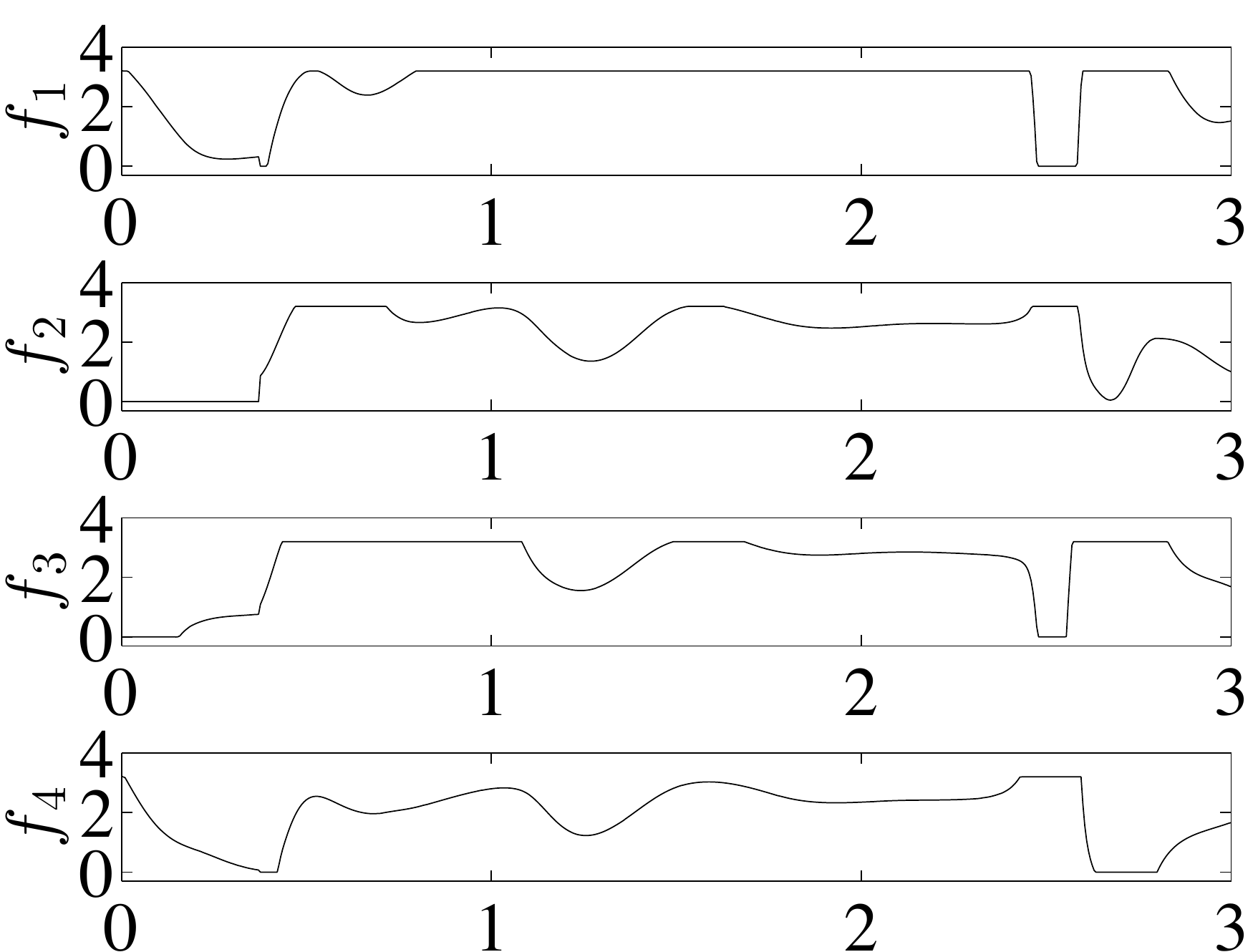}\label{fig:sim_force_WO}}
		\subfigure[Attitude error $e_{R}$ ($\mathrm{rad}$)]{
		\includegraphics[width=0.33\columnwidth]{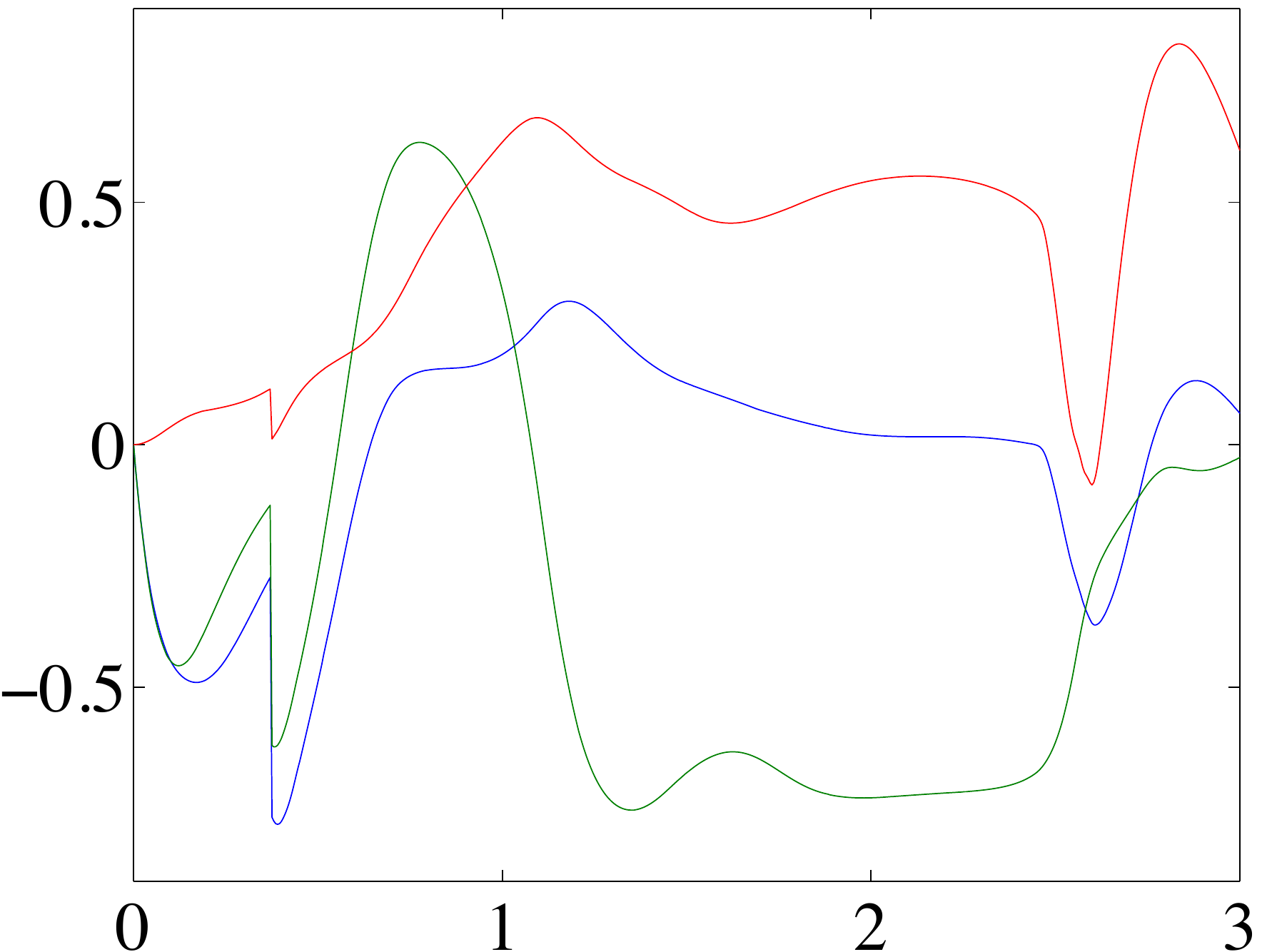}\label{fig:sim_eR_WO}}
}
\centerline{
	\subfigure[Angular Velocity error $e_{\Omega}$($\mathrm{rad}/\mathrm{sec}$)]{
		\includegraphics[width=0.33\columnwidth]{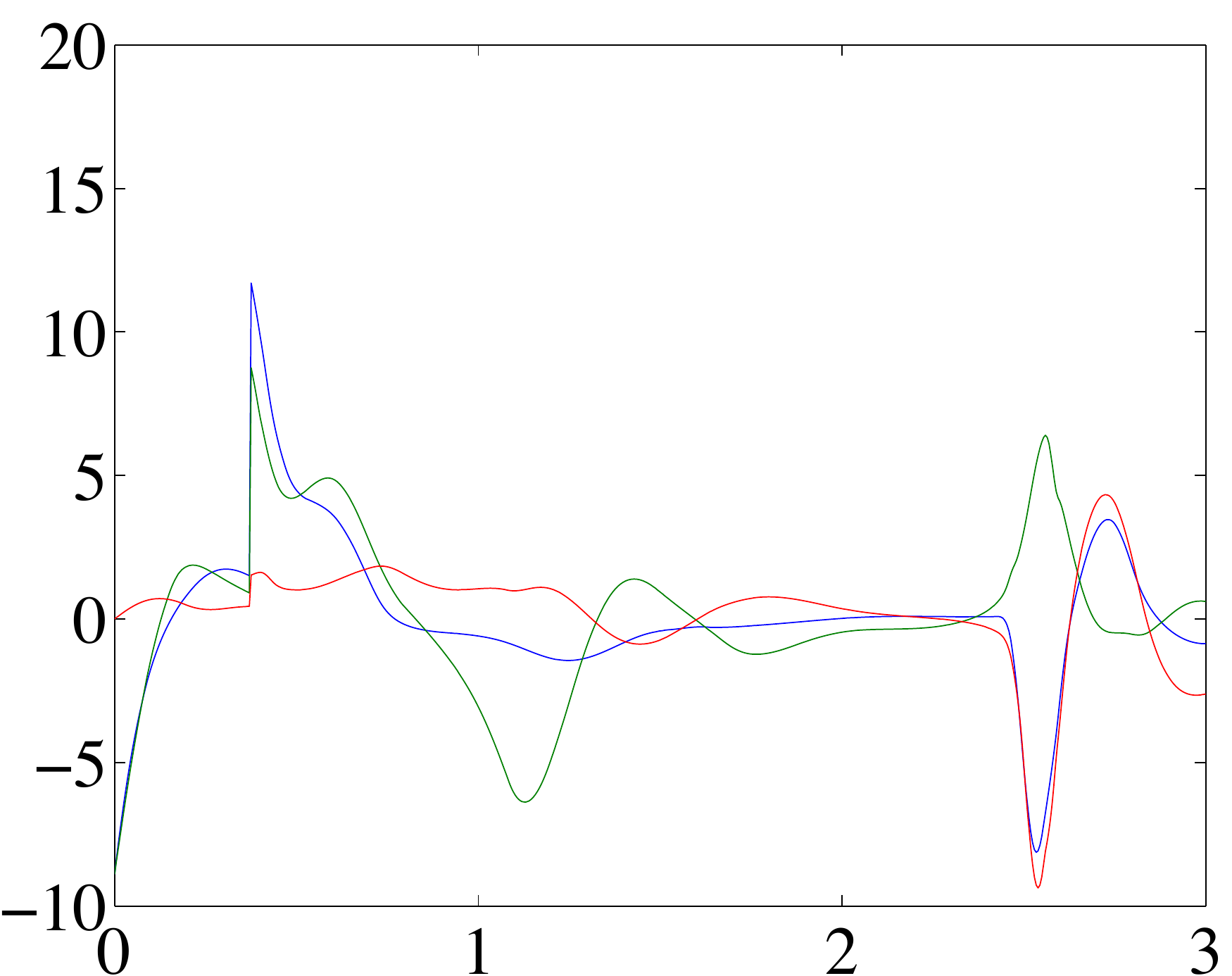}\label{fig:sim_eW_WO}}
		\subfigure[Angular Velocity $\Omega$,$\Omega_{d}$ ($\mathrm{rad/s}$)]{
		\includegraphics[width=0.33\columnwidth]{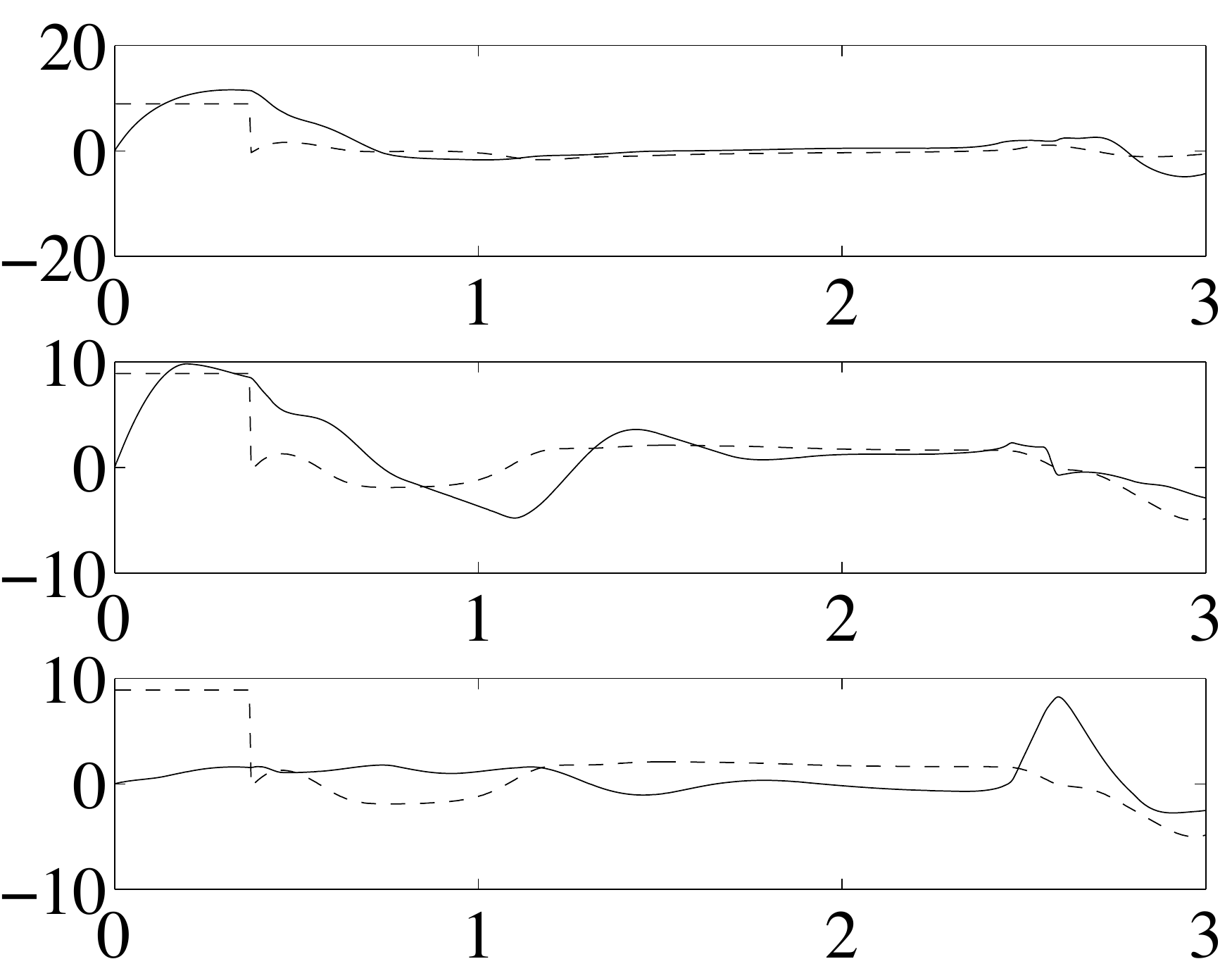}\label{fig:sim_ang_WO}}
	\subfigure[Position $x$,$x_{d}$($\mathrm{m}$)]{
		\includegraphics[width=0.33\columnwidth]{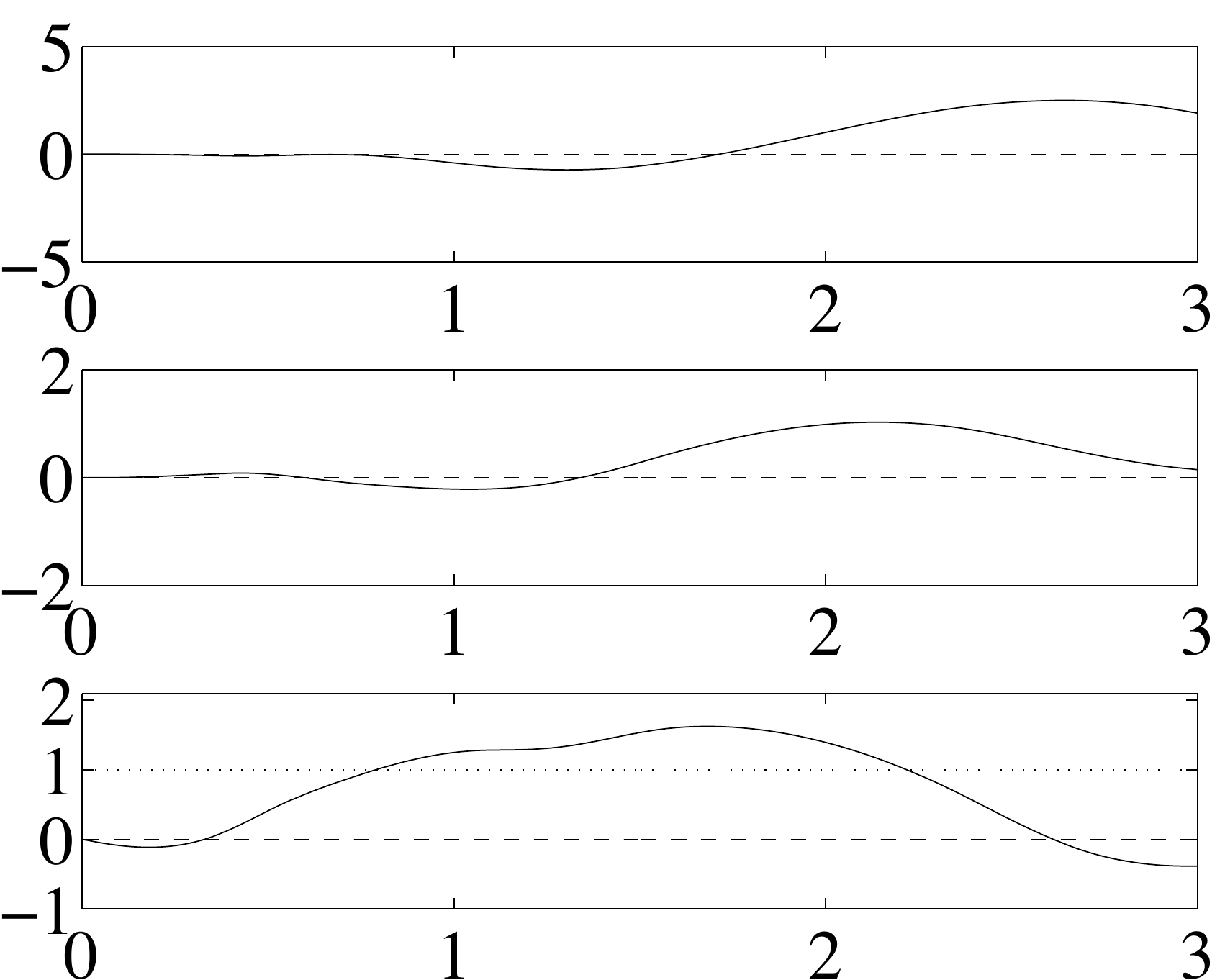}\label{fig:sim_x_WO}}
}
\centerline{
}
\caption{Flipping without adaptive term (dotted:desired, solid:actual)}\label{fig:sim_WO}
\end{figure}

\begin{figure}
\centerline{
	\subfigure[Attitude error function $\Psi$]{
		\includegraphics[width=0.33\columnwidth]{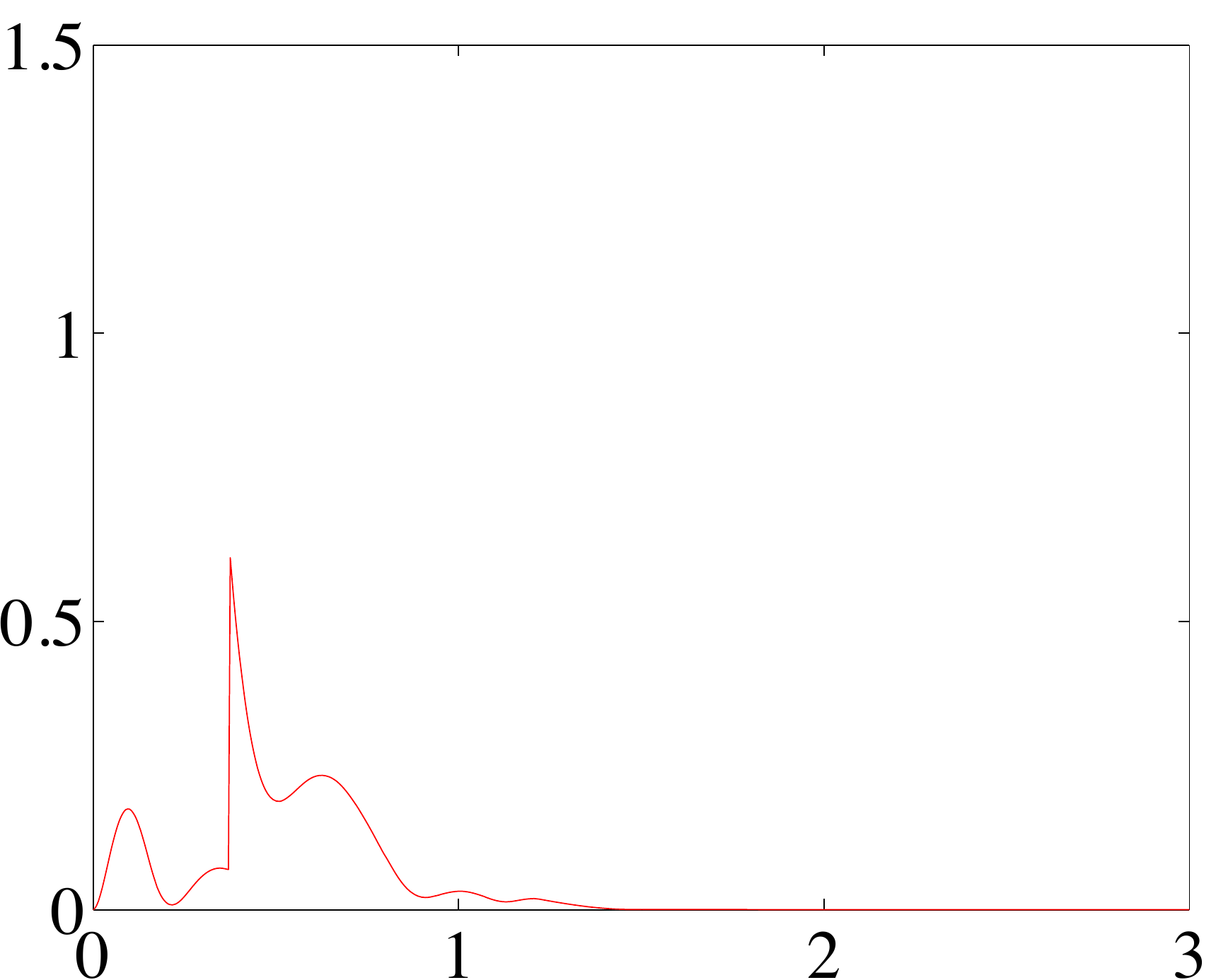}\label{fig:sim_error_W}}
	\subfigure[Thrust at each rotor $f_i$ ($\mathrm{N}$)]{
		\includegraphics[width=0.34\columnwidth]{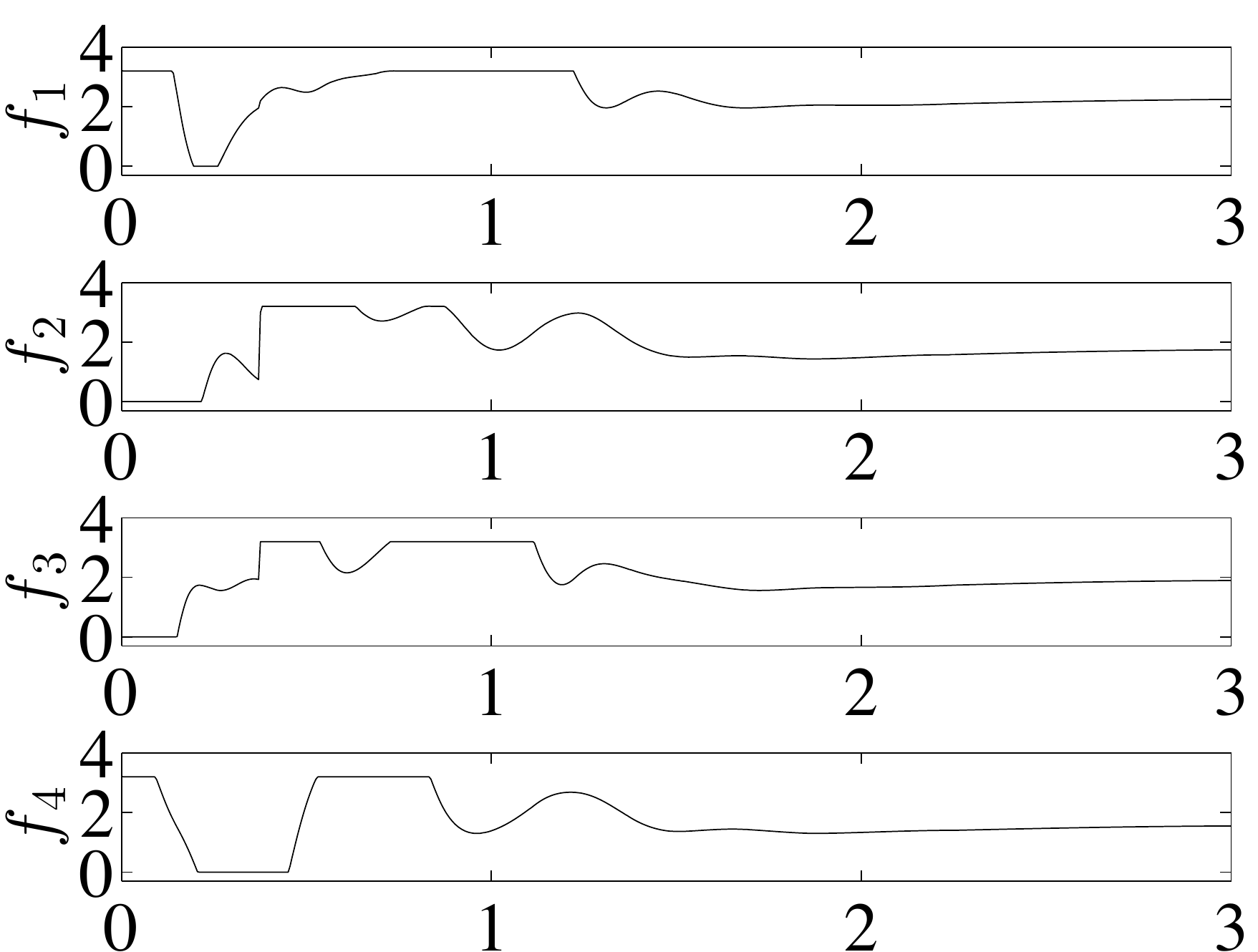}\label{fig:sim_force_W}}
		\subfigure[Attitude error $e_{R}$ ($\mathrm{rad}$)]{
		\includegraphics[width=0.33\columnwidth]{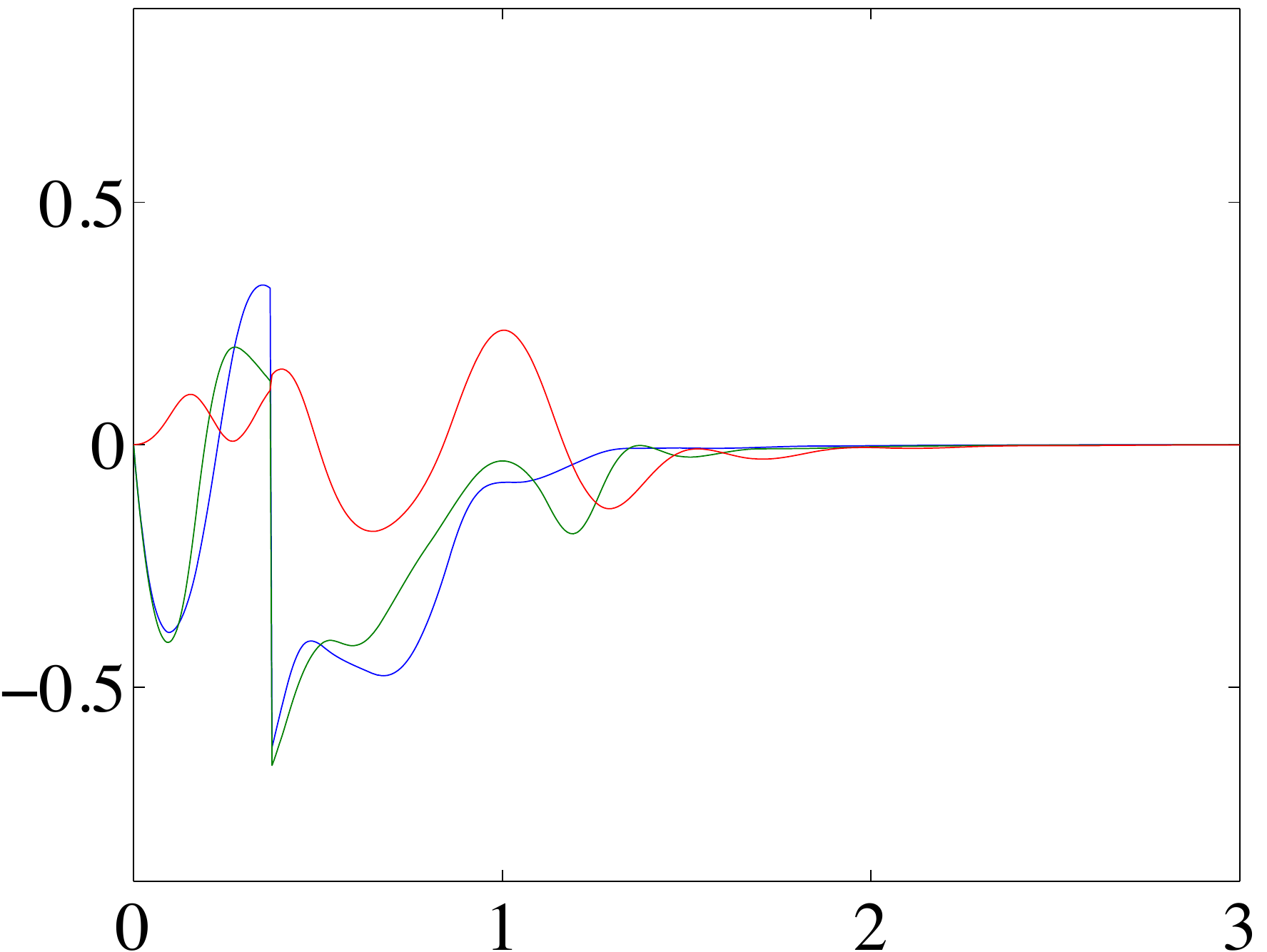}\label{fig:sim_eR_W}}
}
\centerline{
	\subfigure[Angular Velocity error $e_{\Omega}$($\mathrm{rad}/\mathrm{sec}$)]{
		\includegraphics[width=0.33\columnwidth]{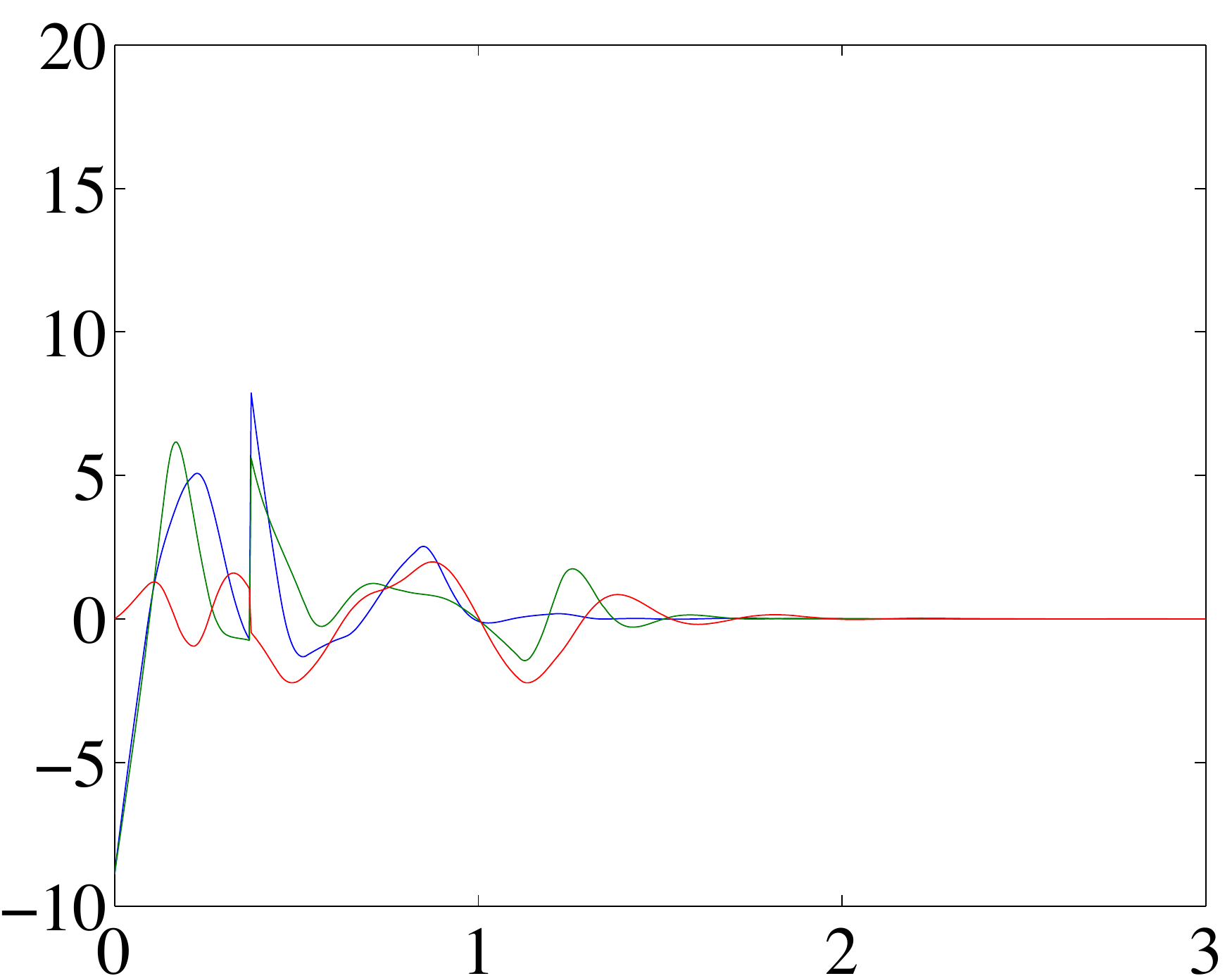}\label{fig:sim_eW_W}}
		\subfigure[Angular Velocity $\Omega$,$\Omega_{d}$ ($\mathrm{rad/s}$)]{
		\includegraphics[width=0.33\columnwidth]{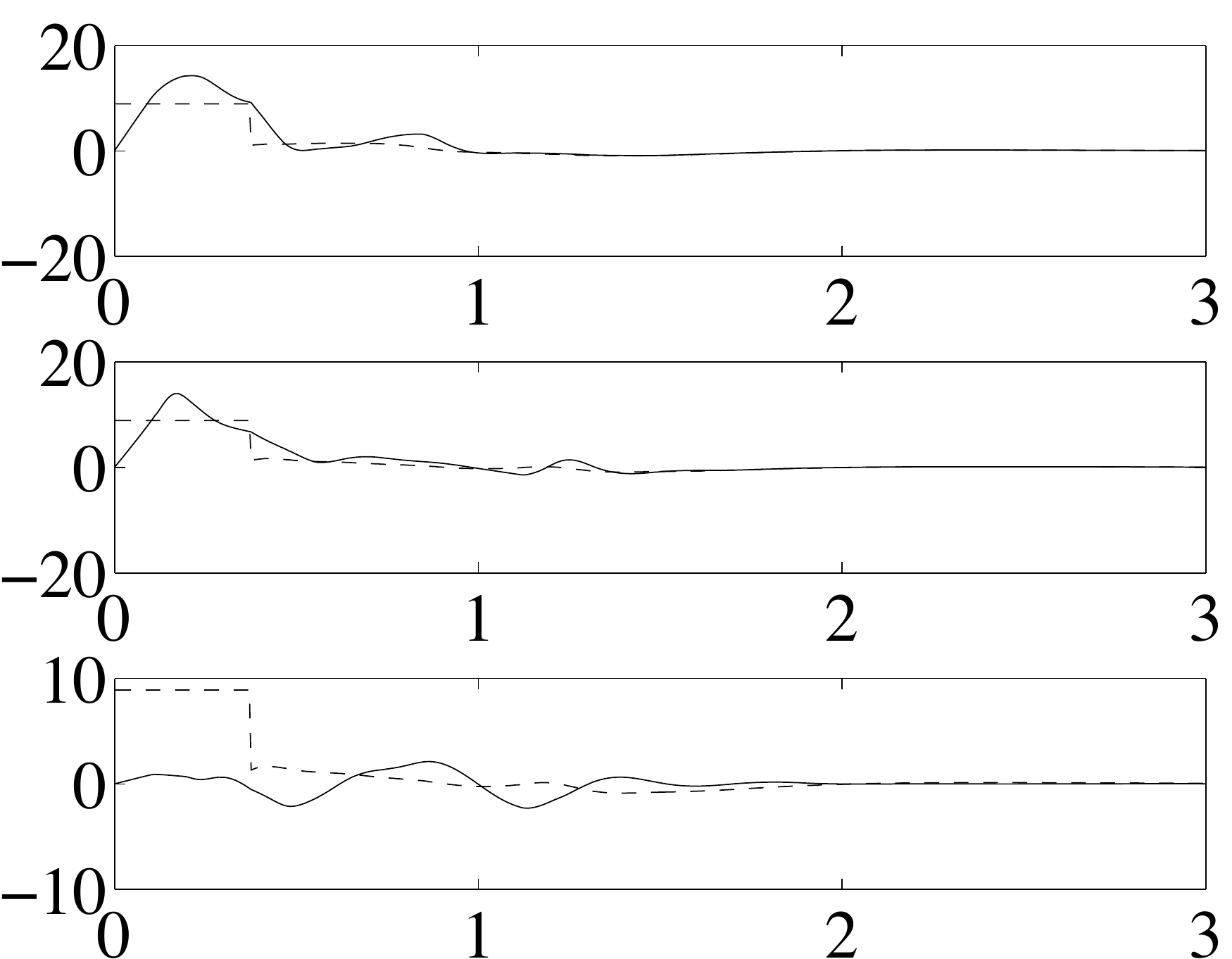}\label{fig:sim_ang_W}}
	\subfigure[Position $x$,$x_{d}$($\mathrm{m}$)]{
		\includegraphics[width=0.33\columnwidth]{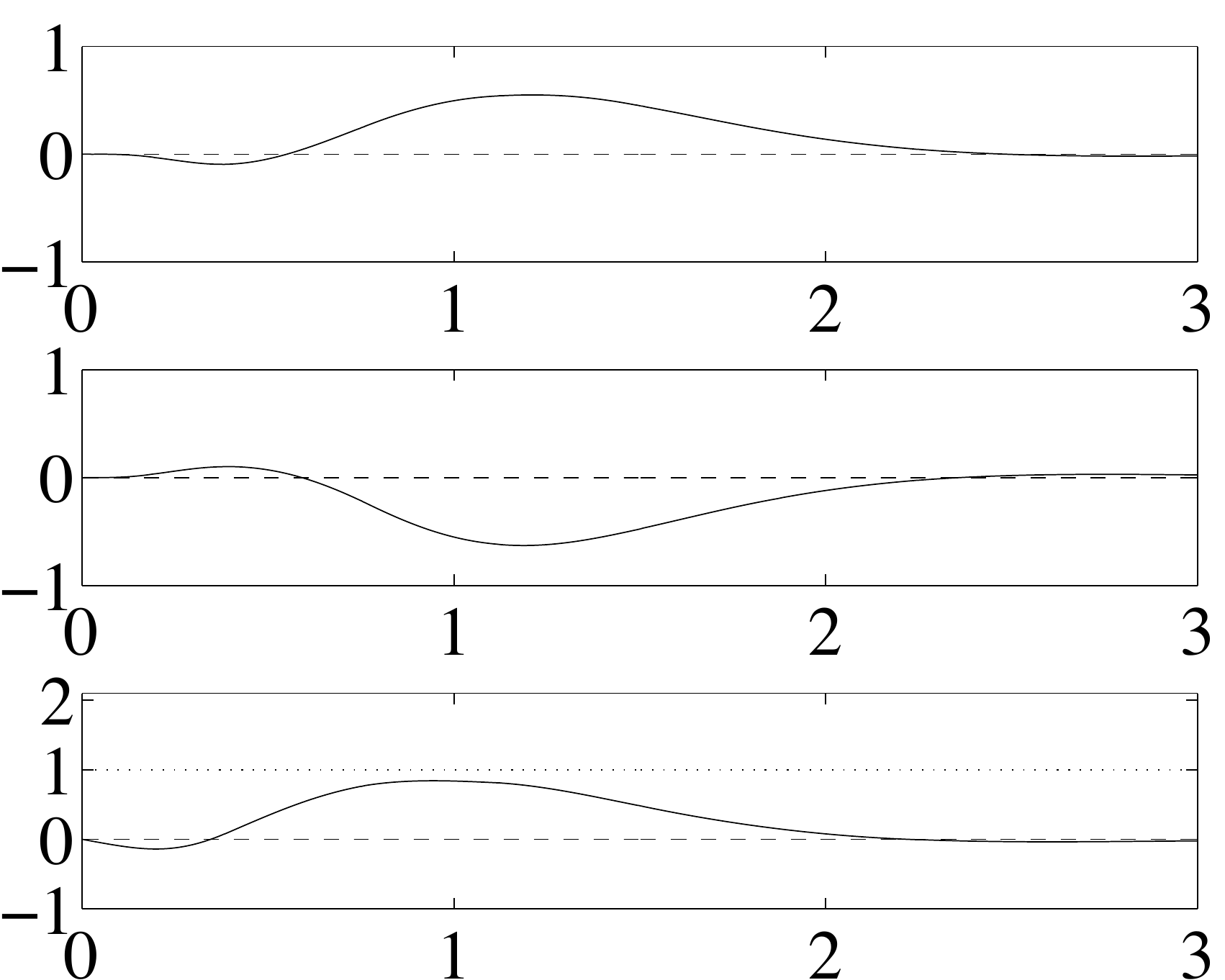}\label{fig:sim_x_W}}
}
\centerline{
}
\caption{Flipping with adaptive term (dotted:desired, solid:actual)}\label{fig:sim_W}
\end{figure}

\begin{figure}
\setlength{\unitlength}{0.08\columnwidth}\scriptsize
\centerline{
\begin{picture}(9.9,4.5)(0,0)
	\put(0,0){\includegraphics[width=0.8\columnwidth]{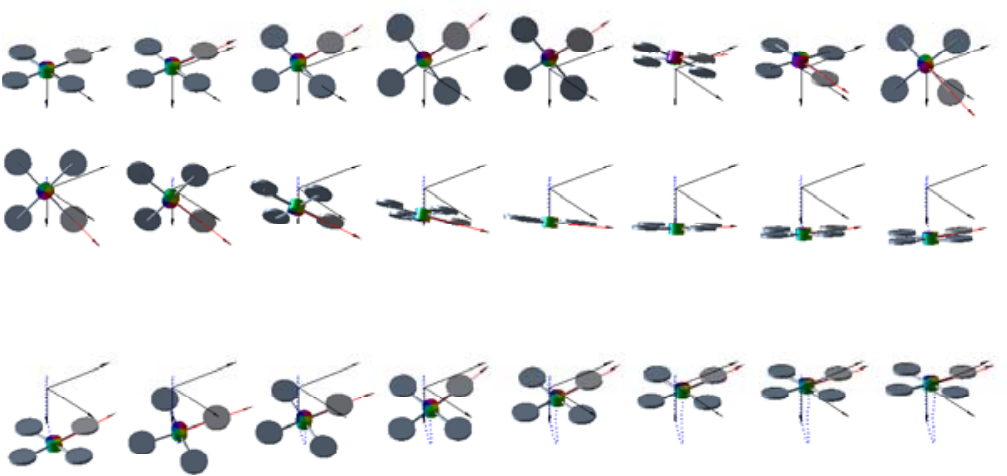}}
	\put(1.00,4.4){$\vec e_1$}
	\put(1.0,3.5){$\vec e_2$}
	\put(0.4,3.35){$\vec e_3$}
\end{picture}}
\caption{Snapshots of a flipping maneuver: the red line denotes the rotation axis $e_r=[\frac{1}{\sqrt{2}},\; \frac{1}{\sqrt{2}},\;0]$. The quadrotor UAV rotates about the $e_r$ axis by $360^\circ$. The trajectory of its mass center is denoted by blue, dotted lines.}\label{fig:Int3D}
\end{figure}

\section{EXPERIMENTAL RESULTS}\label{sec:ER}

In this section, an experimental setup is described and the proposed geometric nonlinear controller is validated with experiments.

\subsection{Hardware Description}\label{HD}

The quadrotor UAV developed at the flight dynamics and control laboratory at the George Washington University is shown in figure~\ref{fig:Quad}, and its parameters are the same as described as the pervious section. The angular velocity is measured from inertial measurement unit (IMU) and the attitude is obtained from IMU data. Position of the UAV is measured from motion capture system (Vicon) and the velocity is estimated from the measurement. Ground computing system receives the Vicon data and send it to the UAV via XBee. The Gumstix is adopted as micro computing unit on the UAV. It has three main threads, namely Vicon thread, IMU thread, and control thread. The Vicon thread receives the Vicon measurement and estimates linear velocity of the quadrotor. In IMU thread, it receives the IMU measurement and estimates the attitude. The last thread handles the control outputs at each time step. Also, control outputs are calculated at 120Hz which is fast enough to run any kind of aggressive maneuvers. Information flow of the system is illustrated in Figure \ref{fig:information_flow}.

\begin{figure}
\centerline{
	\subfigure[Hardware configuration]{
\setlength{\unitlength}{0.1\columnwidth}\scriptsize
\begin{picture}(7,4)(0,0)
\put(0,0){\includegraphics[width=0.7\columnwidth]{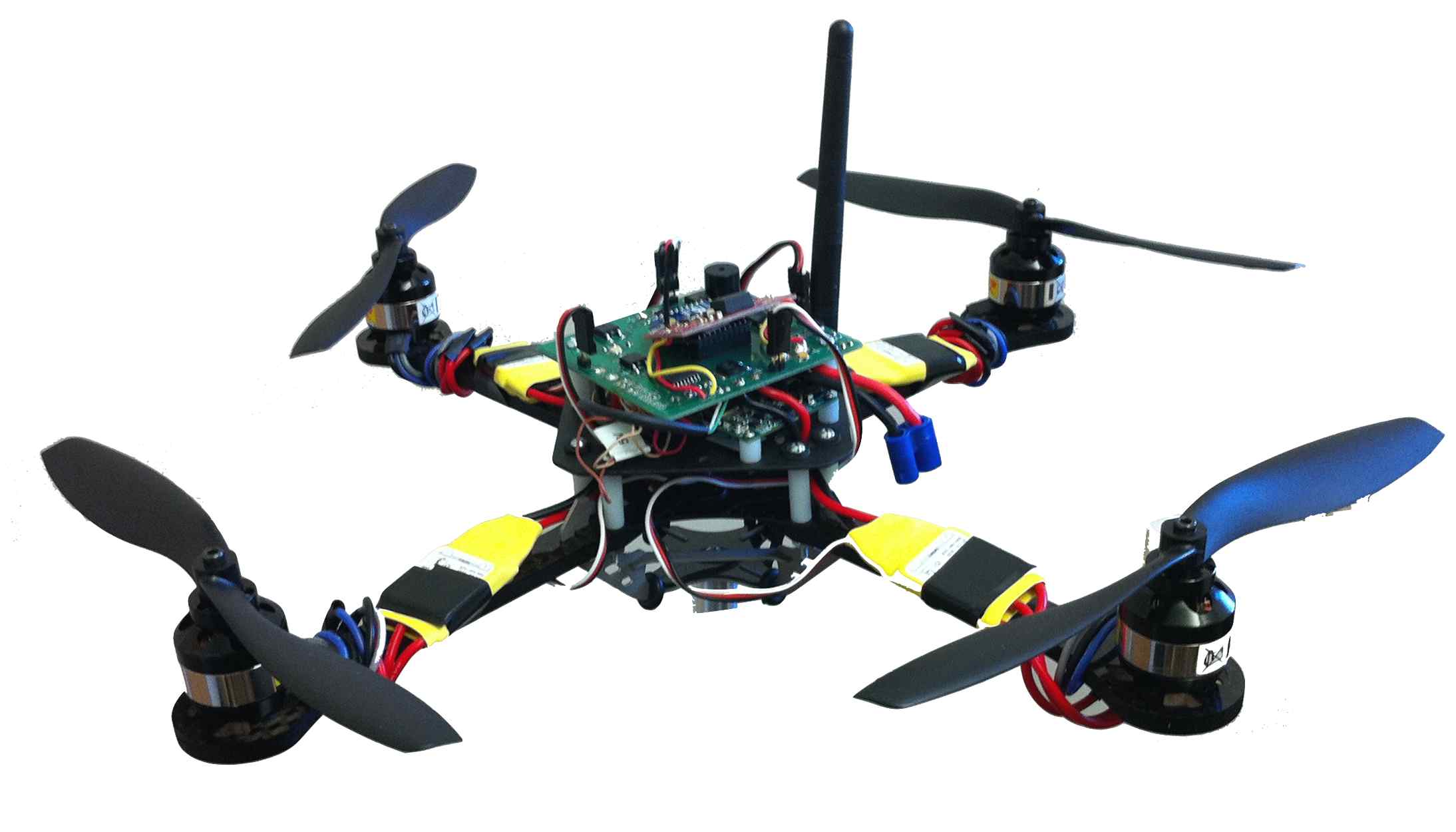}}
\put(1.95,3.2){\shortstack[c]{OMAP 600MHz\\Processor}}
\put(2.3,0){\shortstack[c]{Attitude sensor\\3DM-GX3\\ via UART}}
\put(0.85,1.4){\shortstack[c]{BLDC Motor\\ via I2C}}
\put(0.1,2.5){\shortstack[c]{Safety Switch\\XBee RF}}
\put(4.3,3.2){\shortstack[c]{WIFI to\\Ground Station}}
\put(5,2.0){\shortstack[c]{LiPo Battery\\11.1V, 2200mAh}}
\end{picture}}
}
\caption{Hardware development for a quadrotor UAV}\label{fig:Quad}
\end{figure}

\begin{figure}
\centerline{
	\includegraphics[width=1.0\columnwidth]{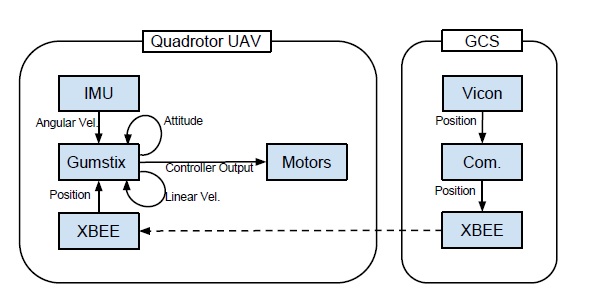}}
\caption{Information flow of overall system}\label{fig:information_flow}
\end{figure}

\subsection{Lissajous Curve Trajectory Tracking}
We consider tracking of arbitrary trajectories. The following Lissajous path is chosen as desired trajectory:
\begin{align*}
x_d(t) = \begin{cases}
x_o-\frac{t}{8}(x_o-x_i) &\hspace{-0.57cm} \mbox{if } 0 \leqq t < 8\\
[\sin(t-8)+\frac{\pi}{2}),\;\sin 2(t-8),\; -1.5]\,\mathrm{m} & \mbox{if } 8 \leqq t\\
\end{cases}.
\end{align*} 
The quadrotor takes-off from $x_o=[0.2,\,-2.8,\,-1.2]\mathrm{m}$ at $t=0\;\mathrm{sec}$ and flies to the initial position of the Lissajous curve trajectory where is $x_i=[1,\,0,\,1.5]\mathrm{m}$ by tracking a linear desired trajectory. Then, the quadrotor starts to follow the Lissajous curve trajectory at $t=8\;\mathrm{sec}$. There is about $0.15\;\mathrm{sec}$ of time delay from the Vicon motion capture system to the Gumstix. However, due to the robustness and stability properties of the proposed controller, position tracking performance shows satisfactory results as shown at Figure~\ref{fig:Lissajous_position} and \ref{fig:Lissajous_xyzz}.
\begin{figure}
\centerline{
	\subfigure[Attitude error variables $\Psi,e_R,e_\Omega$]{
		\includegraphics[width=0.55\columnwidth]{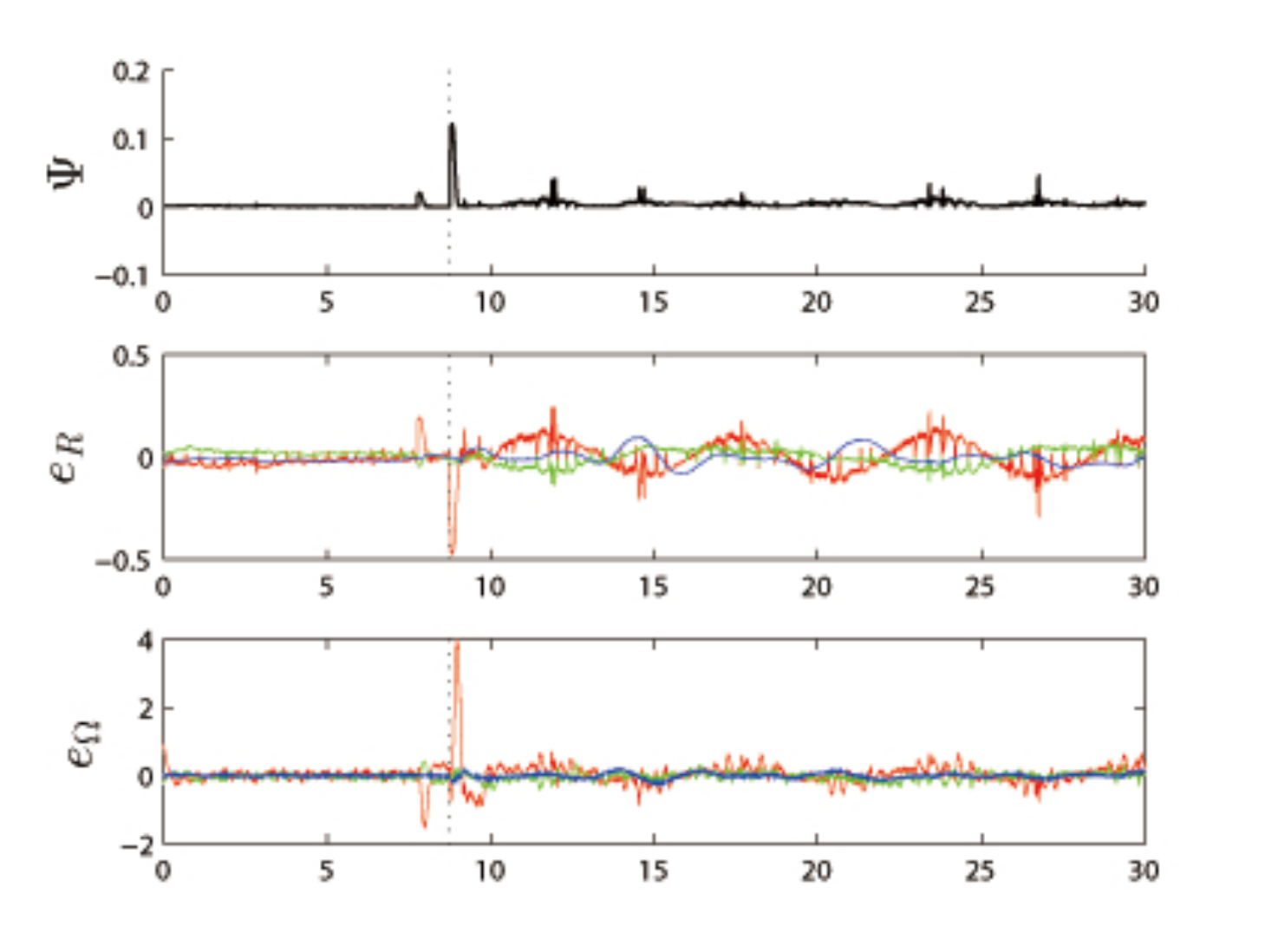}\label{fig:Lissajous_error}}
		\hspace{-0.5cm}
			\subfigure[Thrust of each rotor ($\mathrm{N}$)]{
				\includegraphics[width=0.55\columnwidth]{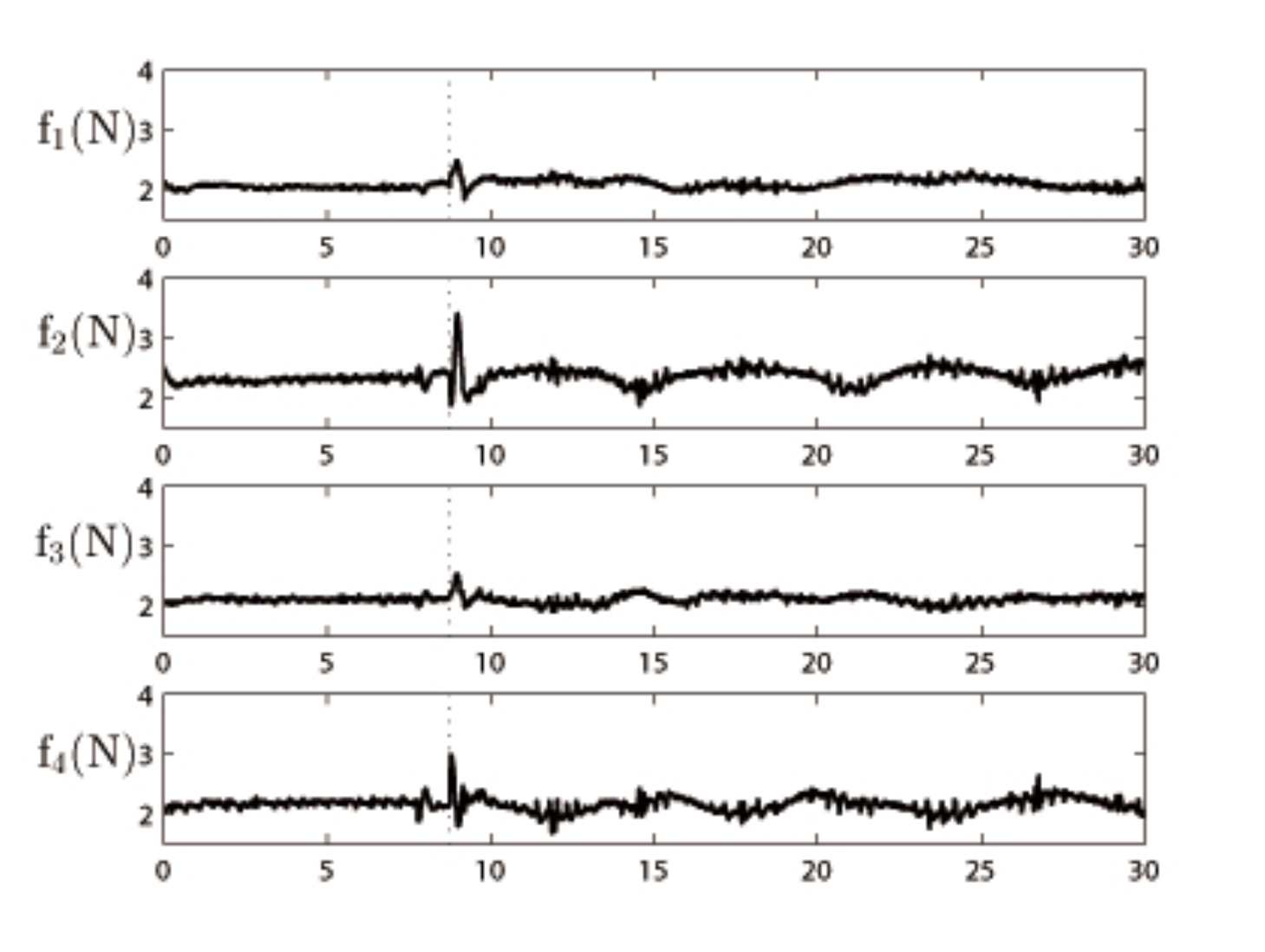}}
}
\centerline{
	\subfigure[Position (solid line) and desired (dotted line) $x,x_d$ ($\mathrm{m}$)]{
		\includegraphics[width=0.55\columnwidth]{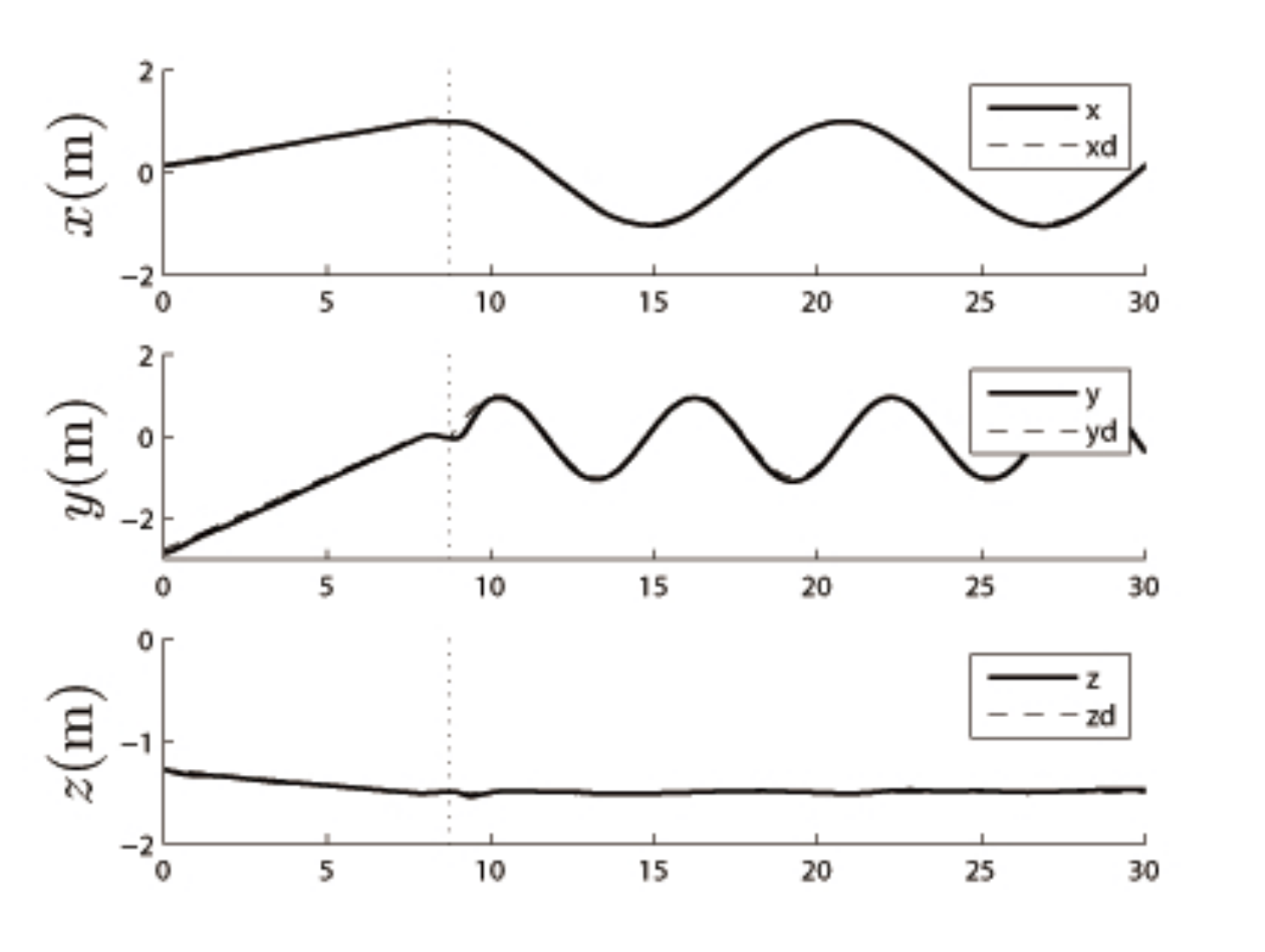}\label{fig:Lissajous_position}}
		\hspace{-0.5cm}
			\subfigure[Linear velocity ($\mathrm{m/sec}$)]{
				\includegraphics[width=0.55\columnwidth]{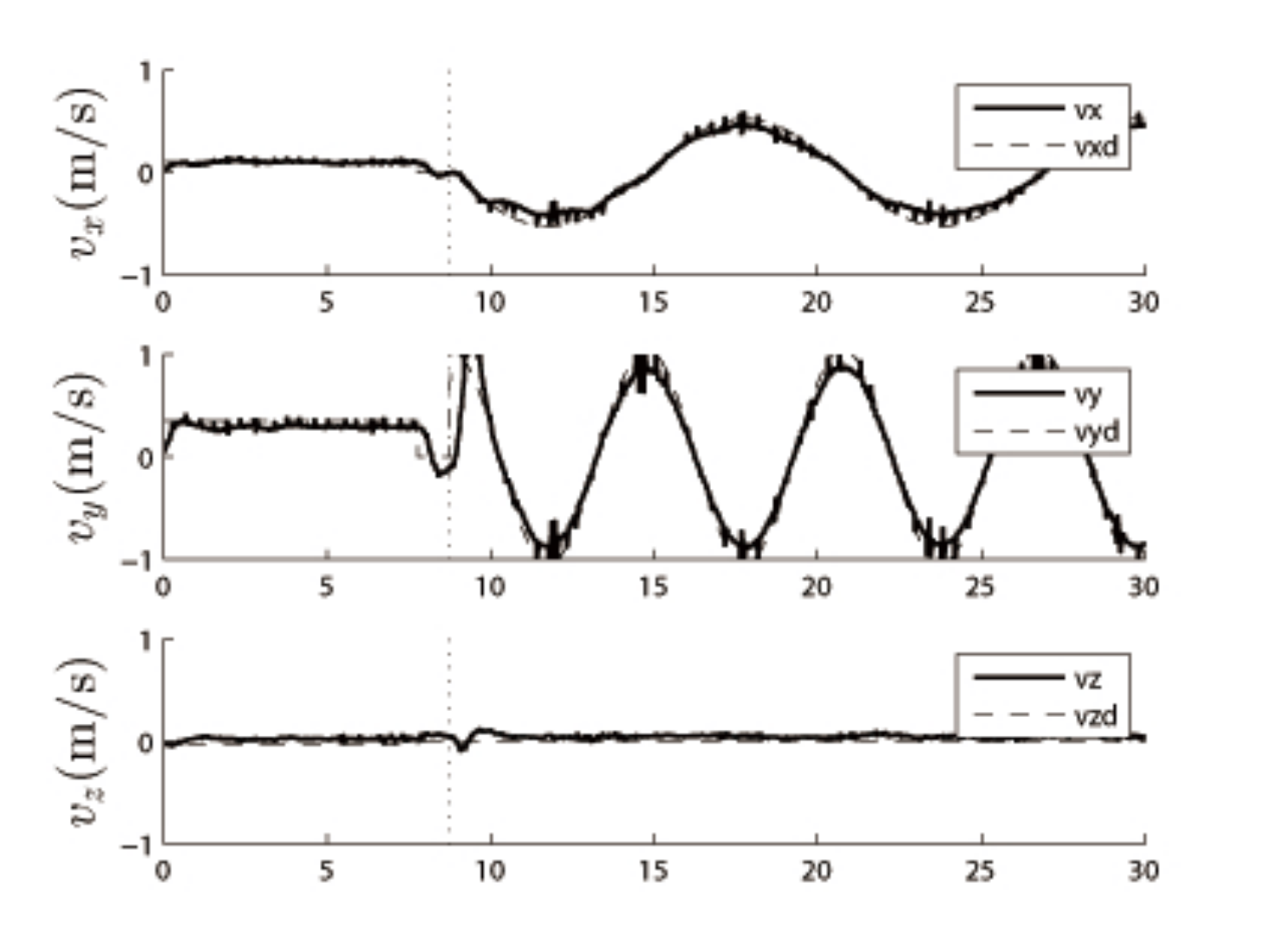}\label{fig:Lissajous_vel}}
}
\centerline{
	\subfigure[Eular angles ($\mathrm{rad}$)]{
		\includegraphics[width=0.55\columnwidth]{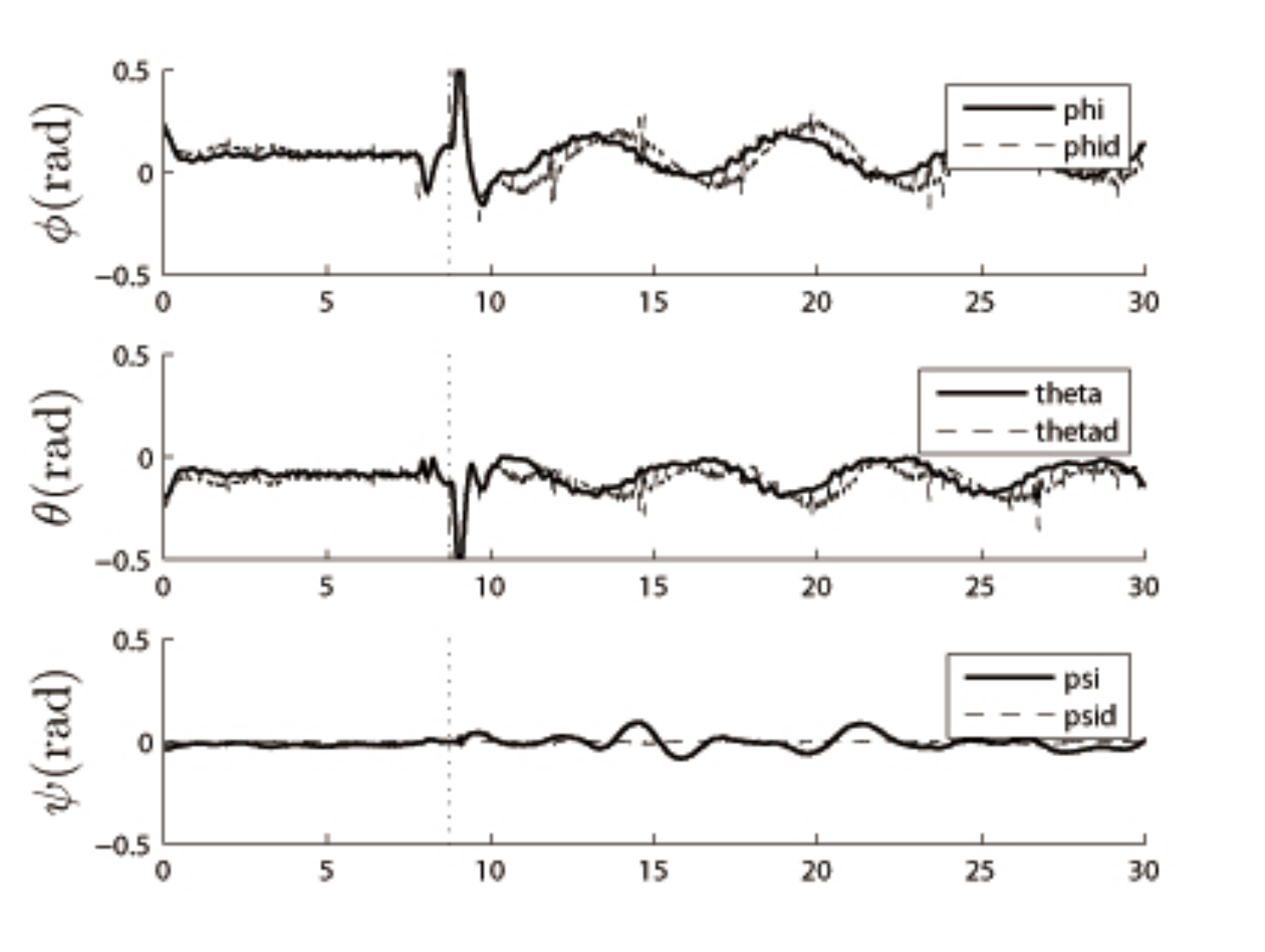}\label{fig:Lissajous_Eular}}
		\hspace{-0.5cm}
			\subfigure[Angular velocity $\Omega,\Omega_d$ ($\mathrm{rad/sec}$)]{
				\includegraphics[width=0.55\columnwidth]{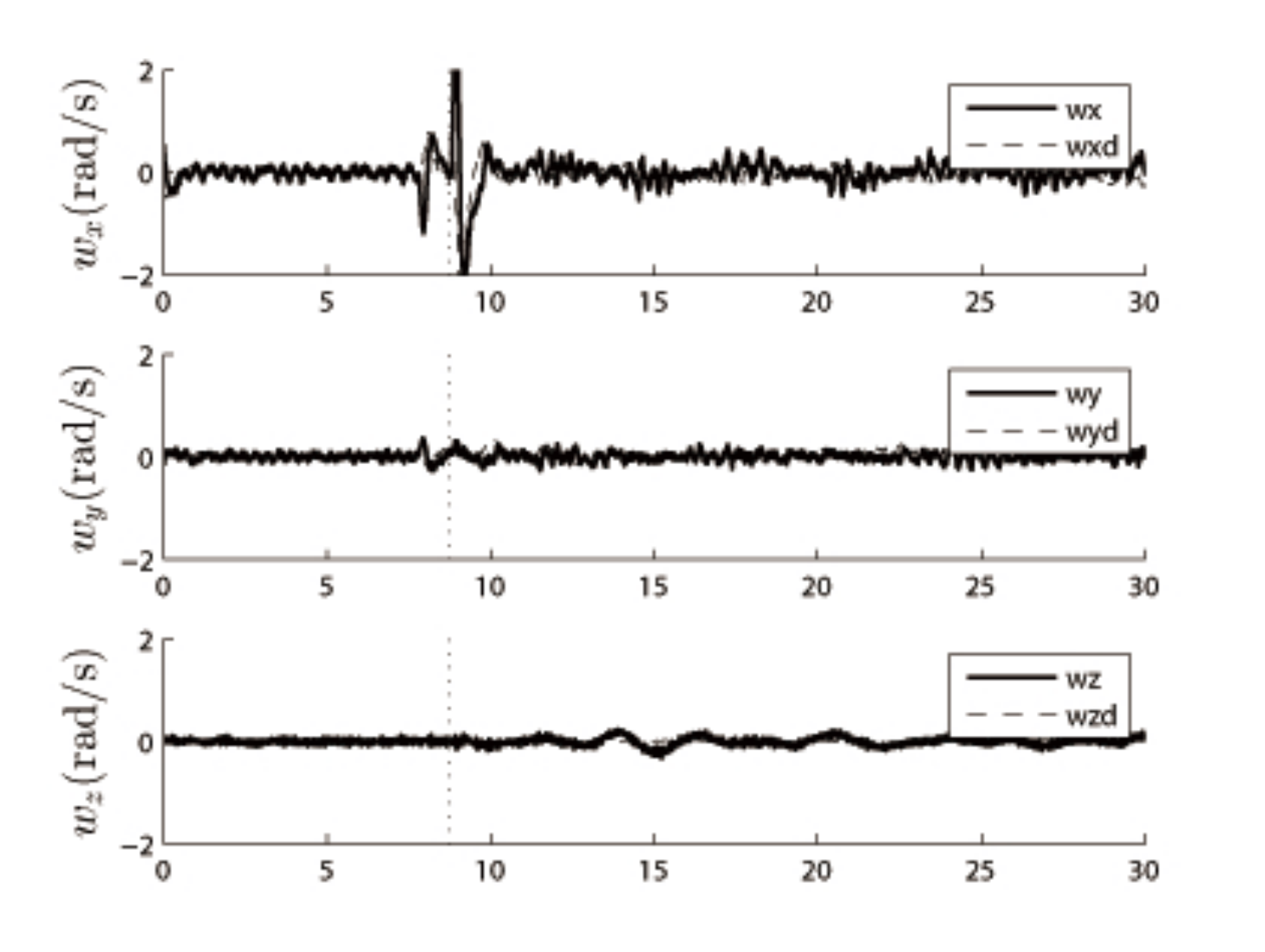}\label{fig:Lissajous_W}}
}
\caption{Lissajous curve trajectory tracking results (dotted:desired, solid:actual)}
\end{figure}

\begin{figure}
\centerline{		
\includegraphics[width=0.9\columnwidth]{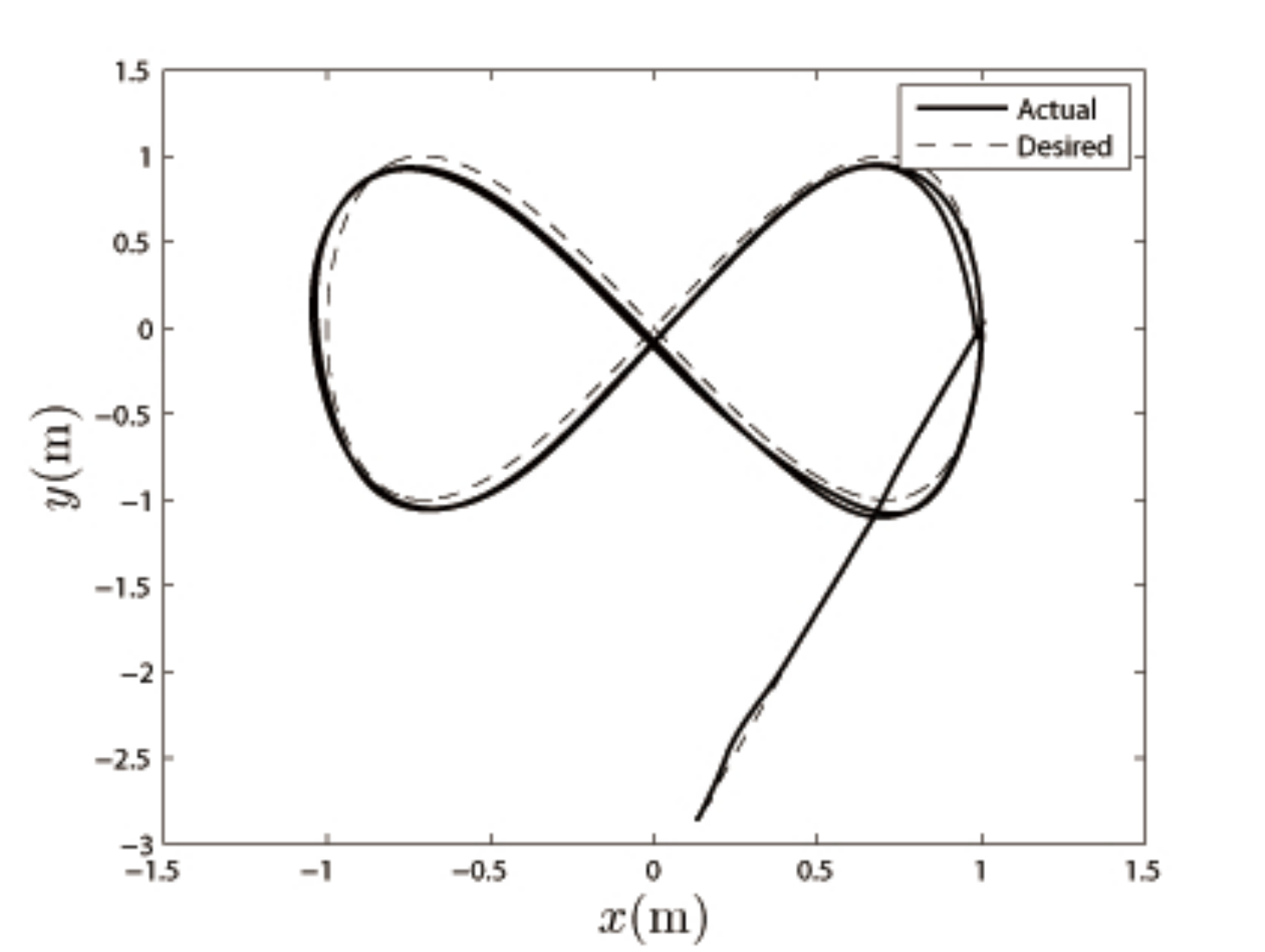}\label{fig:Lissajous_xyzzs}
}
\caption{Lissajous curve $x-y$ plane trajectory}\label{fig:Lissajous_xyzz}
\end{figure}

\subsection{Flipping}
Next, the proposed controller is validated with a flipping maneuver. The quadrotor takes off from a landing platform, increases altitude with constant speed to a constant point, flips $360$ degree about it $x$-axis. As presented in the numerical simulation section, this is a complex maneuver combining a nontrivial pitching maneuver with a yawing motion. It is achieved by concatenating the following two control modes of an attitude tracking same as presented in the numerical simulation to rotate the quadrotor
\begin{align*}
&R_d(t)= I+\sin(4 \pi t)\hat{e}_r+(1-\cos(4 \pi t))(e_r e_r^T-I),\\
&\Omega_d= 5\pi\cdot e_r.
\end{align*}
where $e_{r}=[1,\; 0,\;0]$, and a trajectory tracking mode to make it hover after completing the preceding rotation. As it is clear from the figures, the attitude control part which handles the rotation happens in almost $0.3$ seconds and then it switched to the position control mode to make the quadrotor stabilized and hovers to the desired position. Figure \ref{fig:ffgghhjjhhgg} and \ref{fig:snapflip} show the experimental results and snapshots of the flipping maneuver respectively. \footnote{A short video of the experiments is also available at \url{http://www.youtube.com/watch?v=wtn9L6BsYiE}.}
\begin{figure}\label{fig:ffssrrdd}
\centerline{
	\subfigure[Attitude error variables $\Psi,e_R,e_\Omega$]{
		\includegraphics[width=0.56\columnwidth]{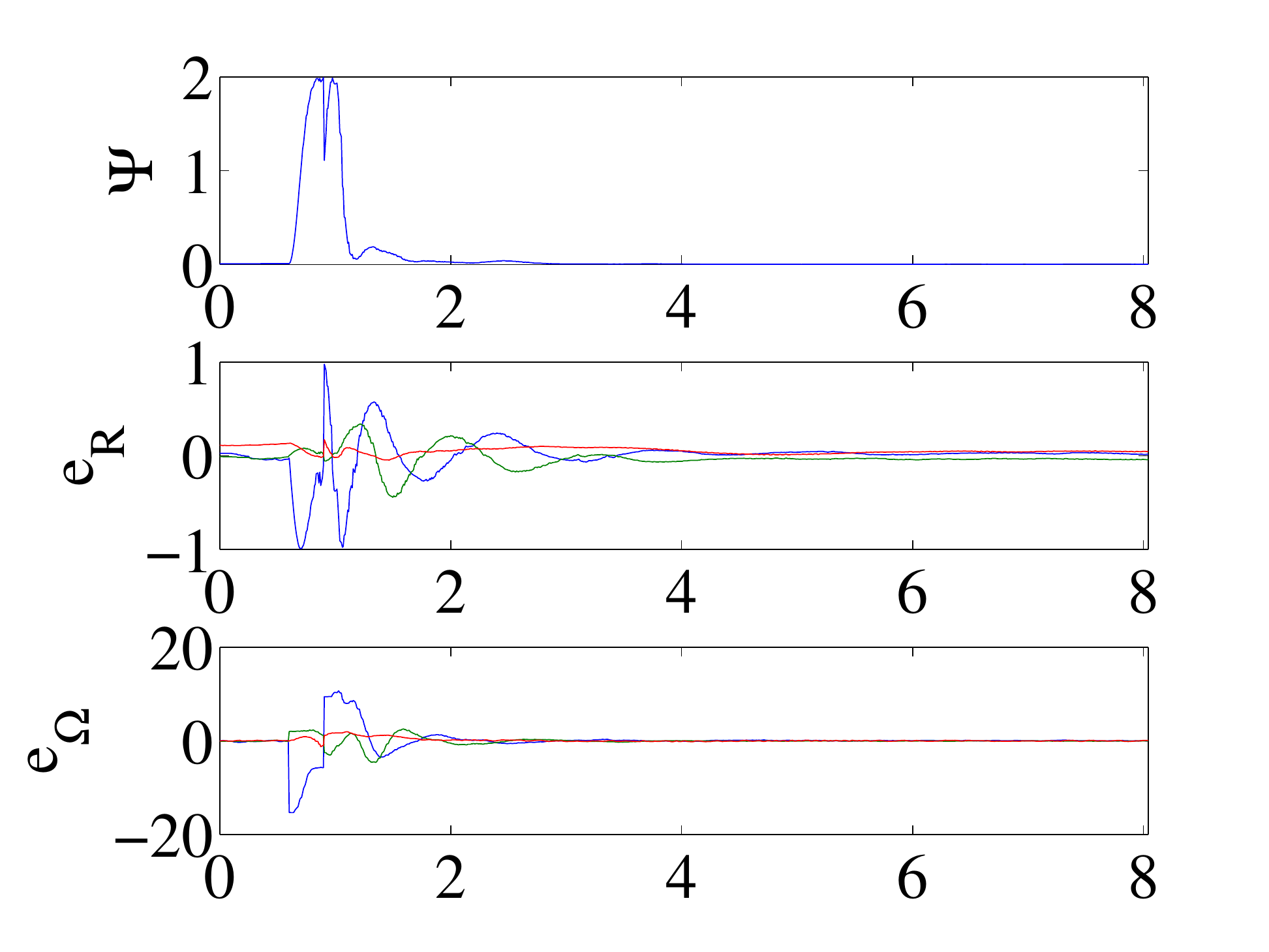}\label{fig:hover_error}}
		\hspace{-0.5cm}
		\subfigure[Thrust of each rotor ($\mathrm{N}$)]{
				\includegraphics[width=0.55\columnwidth]{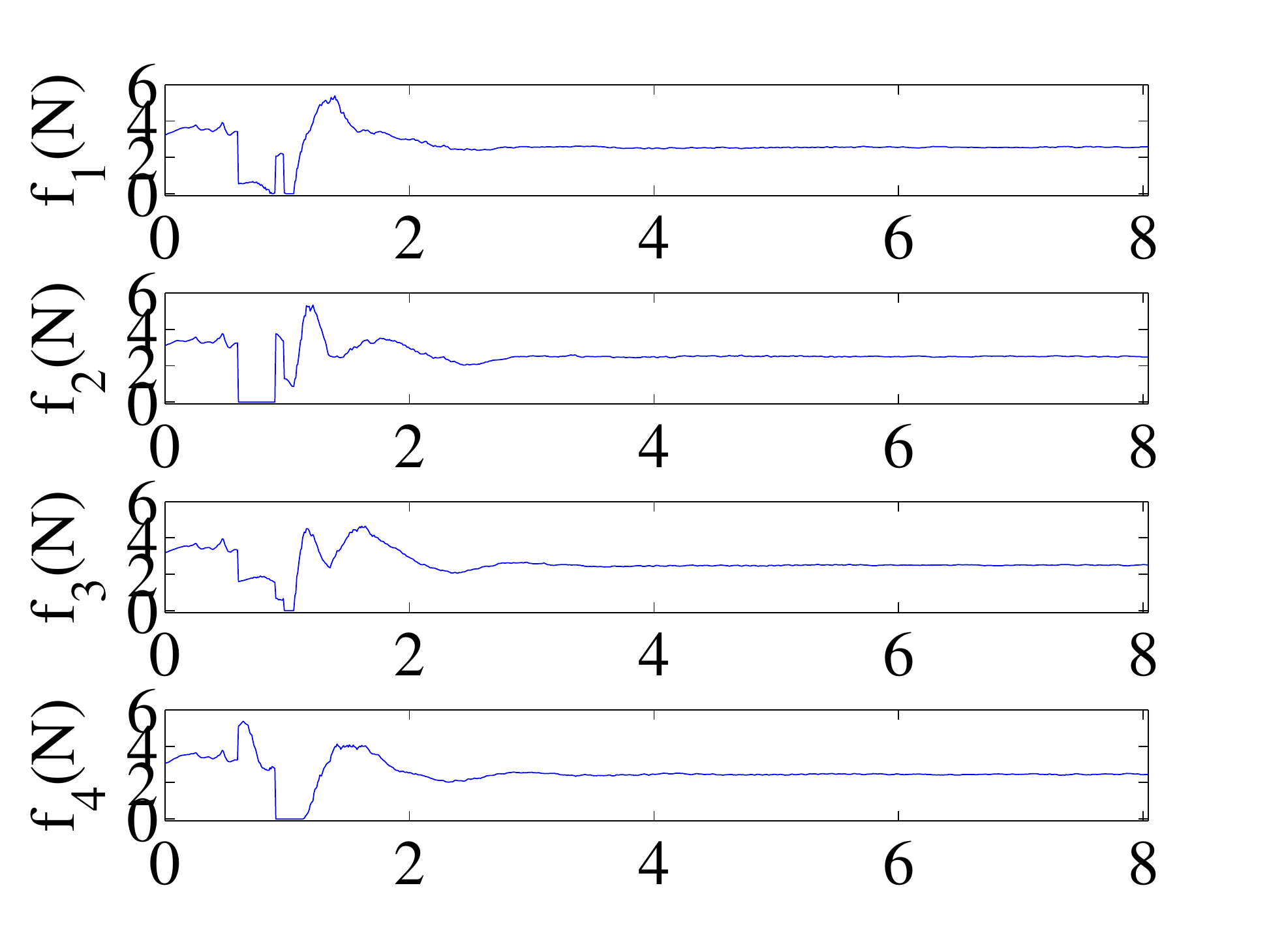}}
}
\centerline{
\subfigure[Position $x,x_d$ ($\mathrm{m}$)]{
		\includegraphics[width=0.55\columnwidth]{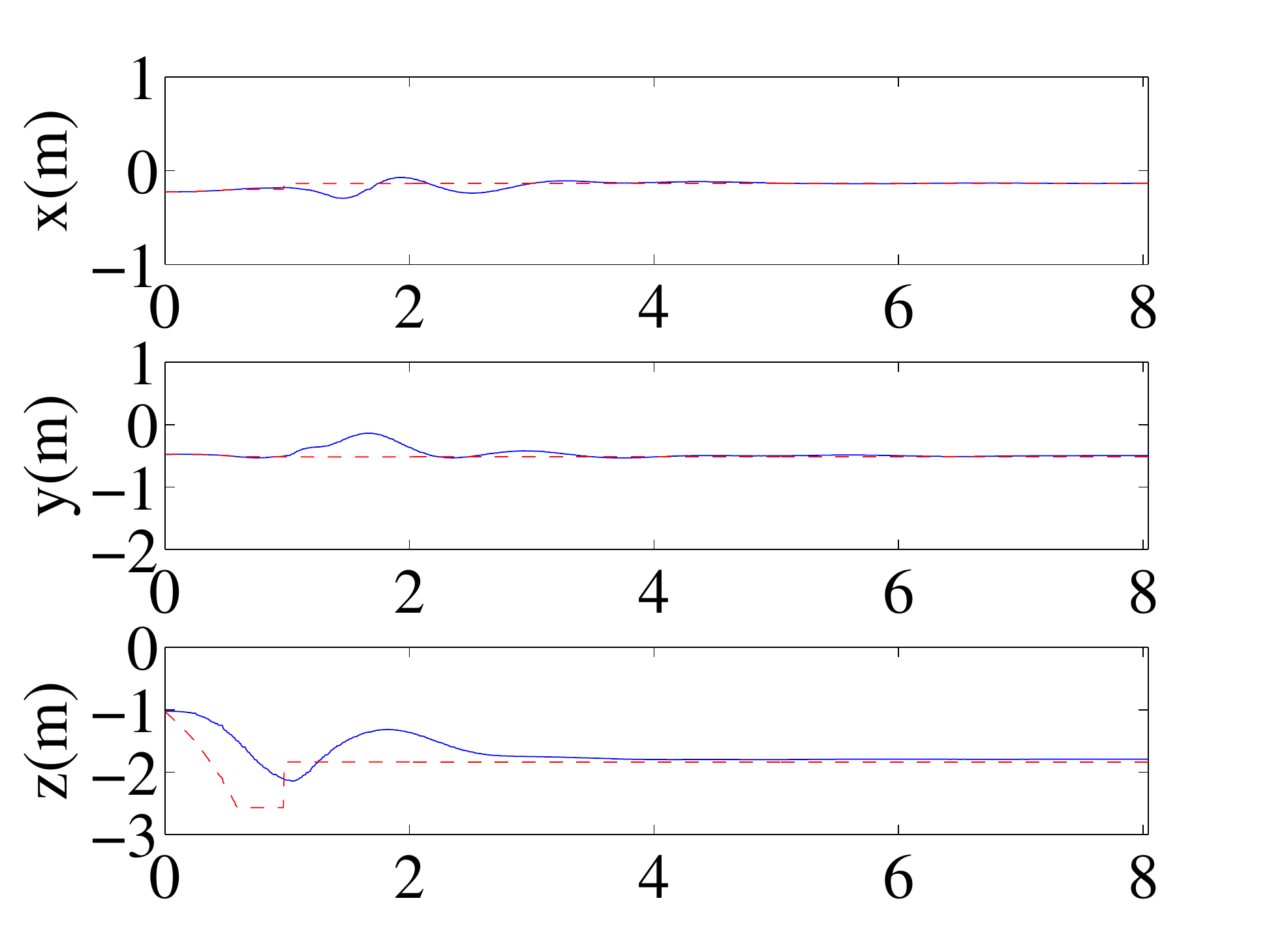}\label{fig:hover_position}}
		\hspace{-0.5cm}
			\subfigure[Linear velocity ($\mathrm{m/sec}$)]{
				\includegraphics[width=0.55\columnwidth]{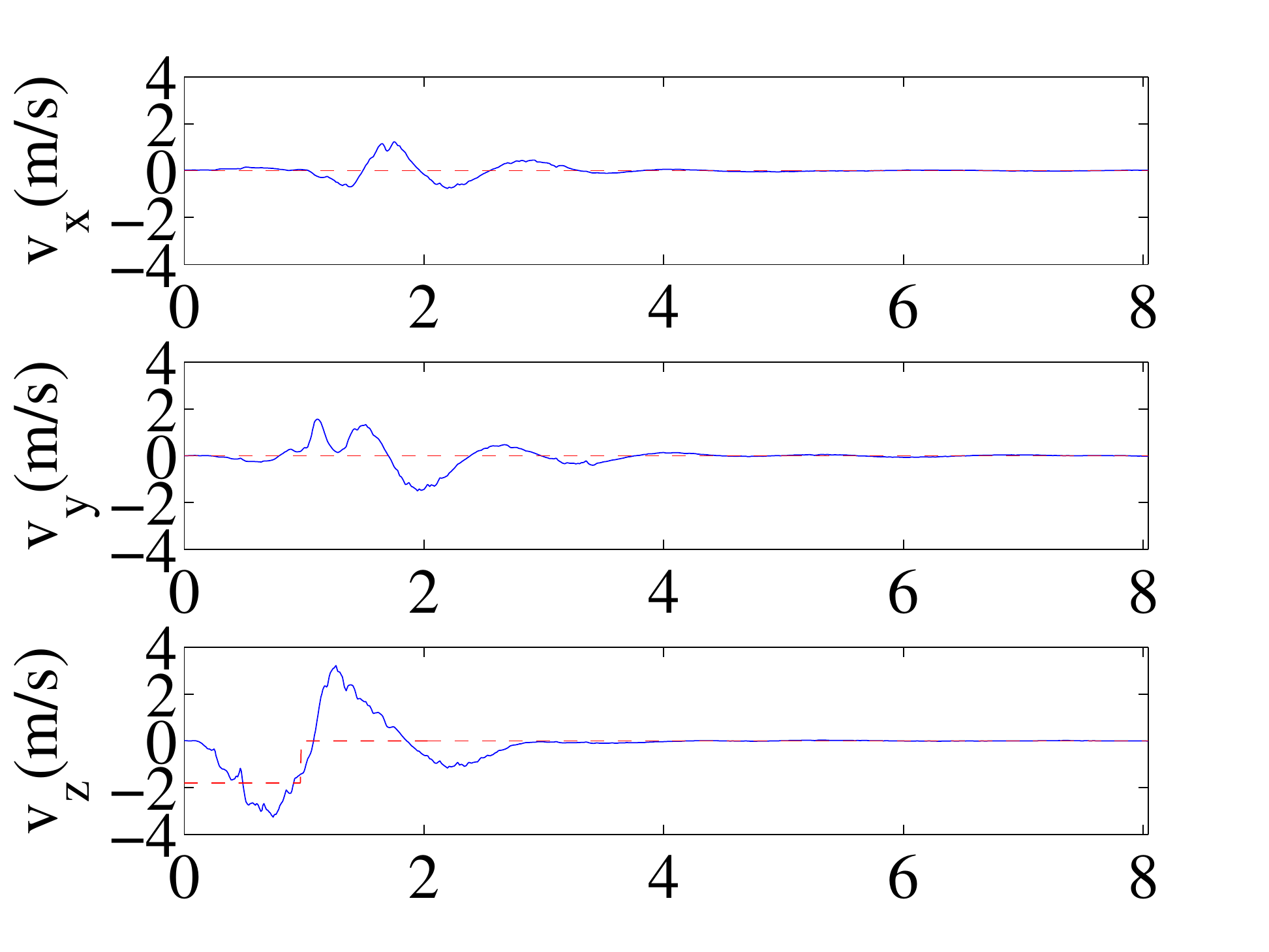}\label{fig:hover_vel}}
}
\centerline{
	\subfigure[Rotation Matrix ]{
		\includegraphics[width=0.55\columnwidth]{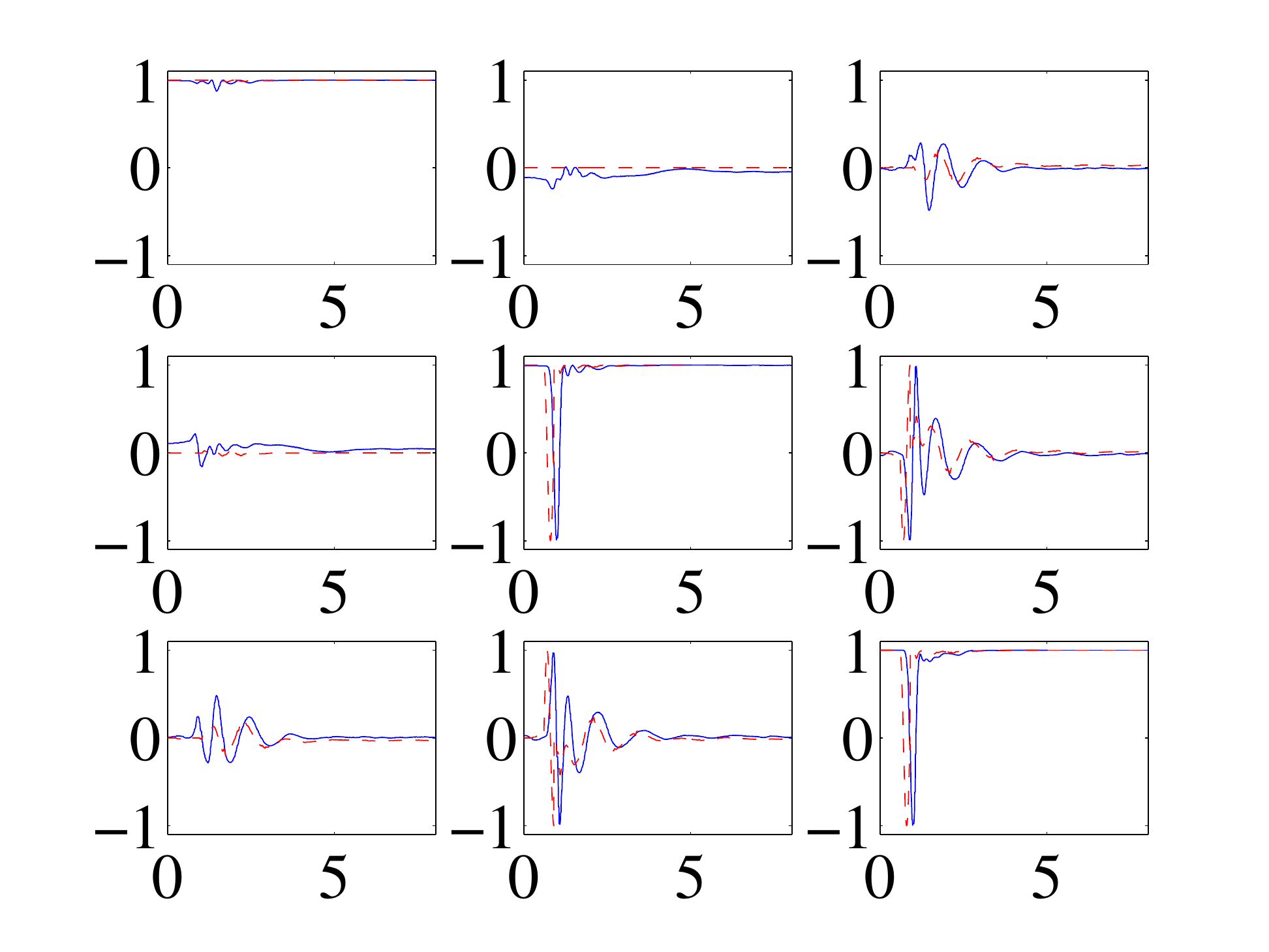}\label{fig:hover_Eular}}
		\hspace{-0.5cm}
			\subfigure[Angular velocity $\Omega,\Omega_d$ ($\mathrm{rad/sec}$)]{
				\includegraphics[width=0.55\columnwidth]{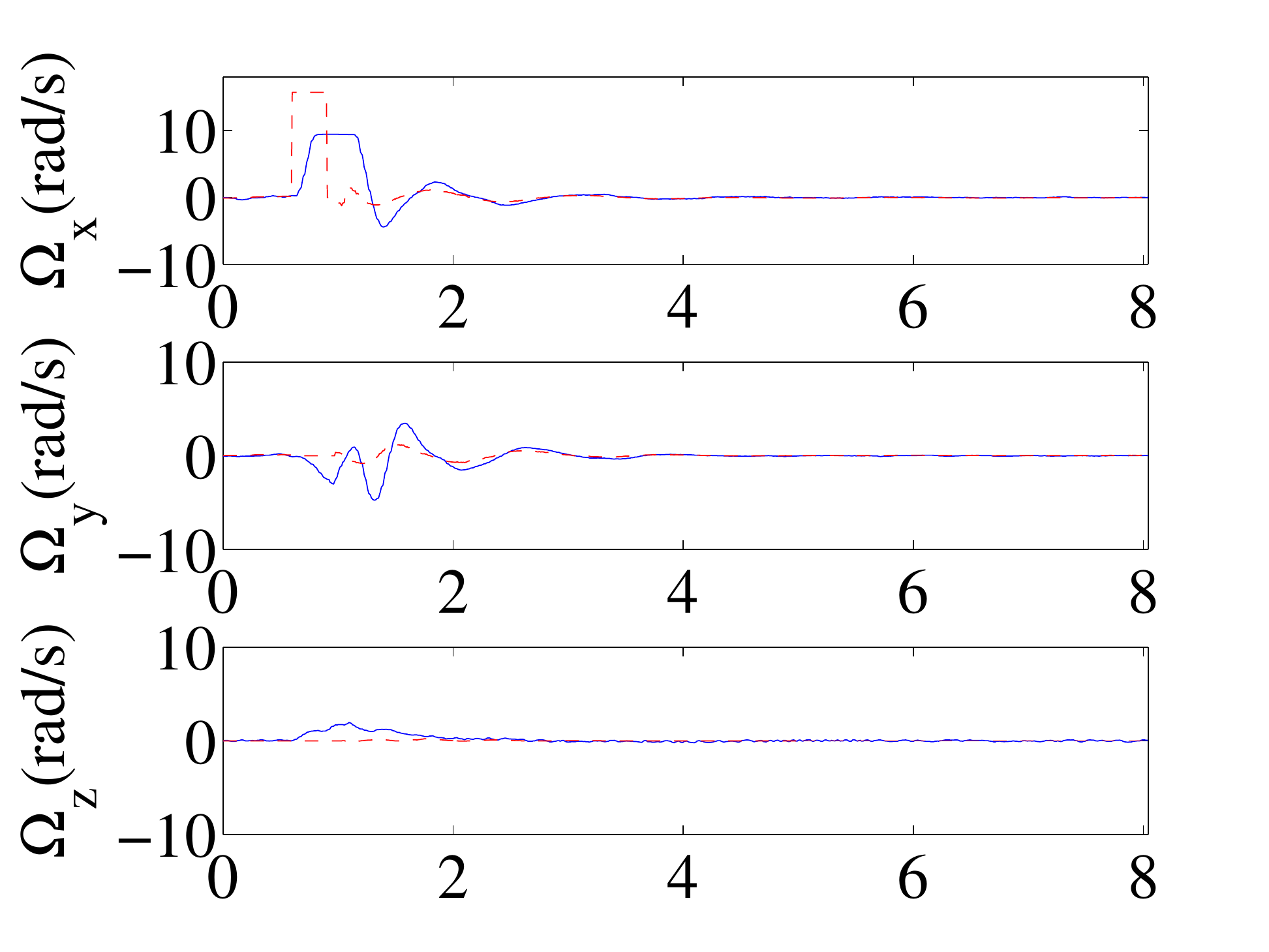}\label{fig:hover_W}}
}
\caption{Flipping flight test results (dotted:desired, solid:actual)}\label{fig:ffgghhjjhhgg}
\end{figure}
\begin{figure}
\centerline{
	\subfigure[$t=0.0$ sec]{
		\includegraphics[width=0.30\columnwidth]{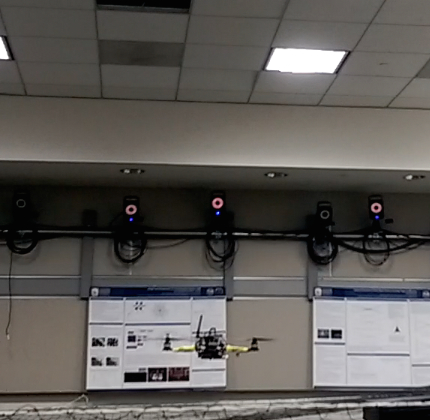}}
		\hfill
		\subfigure[$t=0.8756$ sec]{
				\includegraphics[width=0.30\columnwidth]{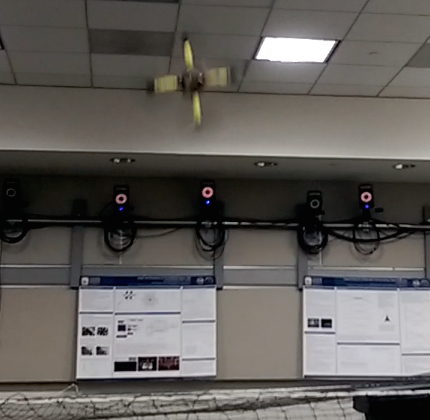}}
				\hfill
		\subfigure[$t=1.008$ sec]{
				\includegraphics[width=0.30\columnwidth]{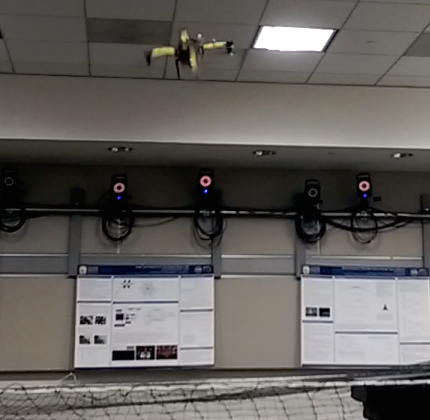}}
}
\centerline{
\subfigure[$t=1.079$ sec]{
		\includegraphics[width=0.30\columnwidth]{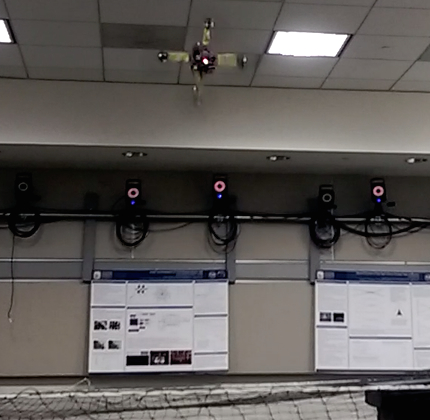}}
		\hfill
			\subfigure[$t=1.125$ sec]{
				\includegraphics[width=0.30\columnwidth]{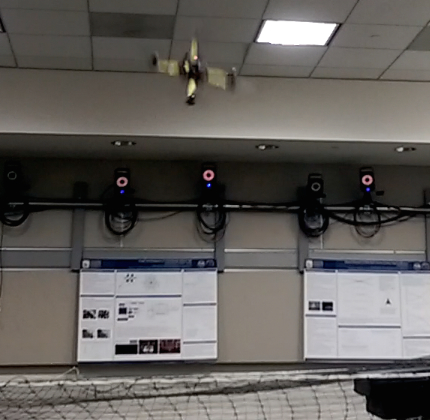}}
				\hfill
		\subfigure[$t=1.175$ sec]{
				\includegraphics[width=0.30\columnwidth]{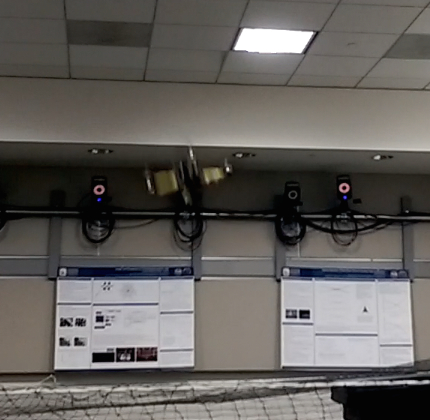}}
}
\centerline{
	\subfigure[$t=1.844$ sec]{
		\includegraphics[width=0.30\columnwidth]{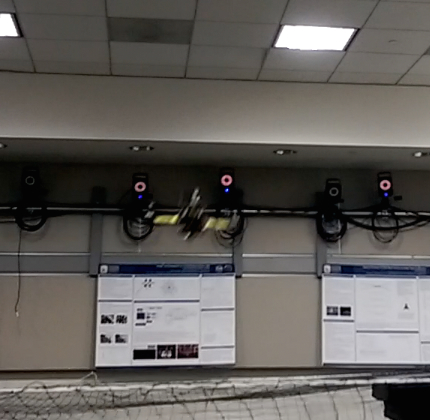}}
		\hfill
			\subfigure[$t=2.312$ sec]{
				\includegraphics[width=0.30\columnwidth]{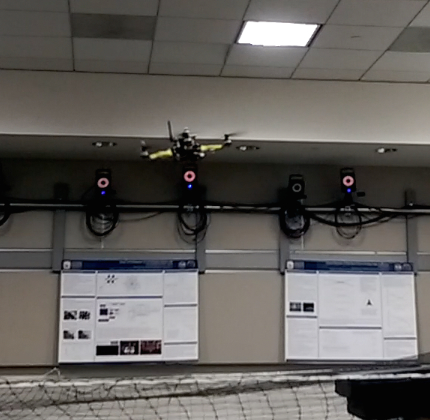}}
				\hfill
		\subfigure[$t=2.89$ sec]{
				\includegraphics[width=0.30\columnwidth]{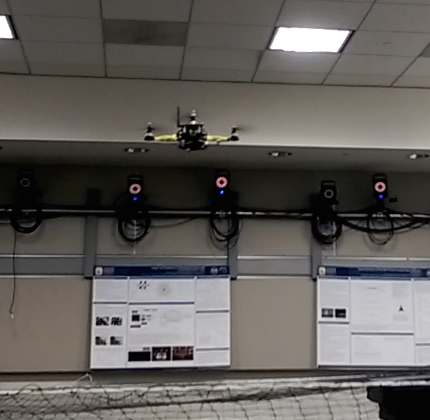}}
}
\caption{Snapshots for flipping maneuver.}
\label{fig:snapflip}
\end{figure}
\section{Conclusions}
A new nonlinear adaptive control system is proposed for tracking control of quadrotor unmanned aerial vehicles. It is developed directly on the special orthogonal group to avoid complexities and ambiguities that are associated with Euler-angles or quaternions, and the proposed adaptive control term guarantees almost global attractivity for the tracking error variables in the existence of uncertainties. These are verified by rigorous mathematical analysis and experiments concurrently.

\begin{acknowledgment}
This research has been supported in part by NSF under the grant CMMI-1243000 (transferred from 1029551), CMMI-1335008, and CNS-1337722.
\end{acknowledgment}

%

\bibliographystyle{asmems4}

\bibliography{asme2e}

\appendix       
\section{Proof of Proposition \ref{prop:Att}}\label{sec:pfAtt}

We first find the error dynamics for $e_R,e_\Omega$, and define a Lyapunov function. Then, we find conditions on control parameters to guarantee the boundedness of tracking errors. Using \refeqn{EL3}, \refeqn{EL4}, \refeqn{M}, the time-derivative of $Je_\Omega$ can be written as
\begin{align}
J\dot e_\Omega & = \{Je_\Omega + d\}^\wedge e_\Omega - k_R e_R-k_\Omega e_\Omega+ \mathds{W}_{R}\tilde{\theta}_{R},\label{eqn:JeWdot}
\end{align}
where $d=(2J-\trs{J}I)R^TR_d\Omega_d\in\Re^3$ and  $\tilde{\theta}_{R}=\theta_{R}-\bar{\theta}_{R}$. The important property is that the first term of the right hand side is normal to $e_\Omega$, and it simplifies the subsequent Lyapunov analysis. Define a Lyapunov function $\mathcal{V}_2$ be 
\begin{align}
\mathcal{V}_2 & = \frac{1}{2} e_\Omega \cdot J e_\Omega + k_R\, \Psi(R,R_d)+c_2 e_R\cdot Je_\Omega+\frac{1}{2\gamma_{R}}\|\tilde{\theta}_{R}\|^2.\label{eqn:V2}
\end{align}
From \refeqn{PsiLB}, \refeqn{PsiUB}, the Lyapunov function $\mathcal{V}_2$ is bounded as
\begin{align}
z_2^T M_{21} z_2+\frac{1}{2\gamma_{R}}\|\tilde{\theta}_{R}\|^2
\leq \mathcal{V}_2 \leq z_2^T M_{22} z_2+\frac{1}{2\gamma_{R}}\|\tilde{\theta}_{R}\|^2,
\label{eqn:V2b}
\end{align}
where $z_2 =[\|e_R\|,\;\|e_\Omega\|]^T\in\Re^2$, and the matrices $M_{12},M_{22}$ are given by
\begin{align}
M_{21} = \frac{1}{2}\begin{bmatrix} k_R & -c_2\lambda_M \\ -c_2\lambda_M & \lambda_m  \end{bmatrix},\,
M_{22} = \frac{1}{2}\begin{bmatrix} \frac{2k_R}{2-\psi_2} & c_2\lambda_M \\ c_2\lambda_M & \lambda_{M}\end{bmatrix}.
\end{align}
From \refeqn{PsiUB}, the upper-bound of \refeqn{V2b} is satisfied in the following domain:
\begin{align}
D_2 = \{ (R,\Omega)\in \SO\times\Re^3\,|\, \Psi(R,R_d)<\psi_2<2\}.\label{eqn:D2}
\end{align}
From \refeqn{Psidot00}, \refeqn{JeWdot}, the time derivative of $\mathcal{V}_2$ along the solution of the controlled system is given by
\begin{align*}
\dot{\mathcal{V}}_2  =&
-k_\Omega\|e_\Omega\|^2  +e_{\Omega}^{T}\mathds{W}_{R}\tilde{\theta}_{R}\\
&+ c_2 \dot e_R \cdot Je_\Omega+ c_2 e_R \cdot J\dot e_\Omega +\frac{1}{\gamma_{R}}(\tilde{\theta})^{T}(\dot{\tilde{\theta}}).
\end{align*}
We have $\dot{\tilde{\theta}}_{R}=-\dot{\bar{\theta}}_{R}$. Substituting \refeqn{JeWdot}, the above equation becomes
\begin{align*}
\dot{\mathcal{V}}_2  =&
-k_\Omega\|e_\Omega\|^2  + c_2 \dot e_R \cdot Je_\Omega-c _2 k_R \|e_R\|^2 \\
&+ c_2 e_R \cdot ((Je_\Omega+d)^\wedge e_\Omega -k_\Omega e_\Omega)\\
&+\tilde{\theta}_{R}^{T}\mathds{W}_{R}^{T}(e_{\Omega}+c_{2}e_{R})-\frac{1}{\gamma_{R}}\tilde{\theta}_{R}^{T}\dot{\bar{\theta}}_{R}.
\end{align*}
By substituting the adaptive law given by \refeqn{eI}
\begin{align*}
\dot{\mathcal{V}}_2  =&
-k_\Omega\|e_\Omega\|^2  + c_2 \dot e_R \cdot Je_\Omega-c _2 k_R \|e_R\|^2 \\
&+ c_2 e_R \cdot ((Je_\Omega+d)^\wedge e_\Omega -k_\Omega e_\Omega).
\end{align*}
Since $\|e_R\|\leq 1$, $\|\dot e_R\|\leq \|e_\Omega\|$, and $\|d\|\leq B_2$, we have
\begin{align}
\dot{\mathcal{V}}_2 \leq - z_2^T W_2 z_2,\label{eqn:dotV2}
\end{align}
where the matrix $W_2\in\Re^{2\times 2}$ is given by
\begin{align*}
W_2 = \begin{bmatrix} c_2k_R & -\frac{c_2}{2}(k_\Omega+B_2) \\ 
-\frac{c_2}{2}(k_\Omega+B_2) & k_\Omega-2c_2\lambda_M \end{bmatrix}.
\end{align*}

\section{Proof of Proposition \ref{prop:Pos}}\label{sec:pfPos}
\setcounter{paragraph}{0}
We derive the tracking error dynamics and a Lyapunov function for the translational dynamics of a quadrotor UAV, and later it is combined with the stability analyses of the rotational dynamics. The subsequent analyses are developed in the domain $D_1$
\begin{align}
D_1=\{&(e_x,e_v,R,e_\Omega)\in\Re^3\times\Re^3\times \SO\times\Re^3\,|\,\nonumber\\
& \|e_x\|< e_{x_{\max}},\;\Psi< \psi_1 < 1\},\label{eqn:D}
\end{align}
Similar to \refeqn{PsiUB}, we can show that 
\begin{align}
\frac{1}{2} \norm{e_R}^2 \leq  \Psi(R,R_c) \leq \frac{1}{2-\psi_1} \norm{e_R}^2\label{eqn:eRPsi1}.
\end{align}

\subsection{Translational Error Dynamics} The time derivative of the position error is $\dot e_x=e_v$. The time-derivative of the velocity error is given by
\begin{align}
m\dot e_v = m\ddot x -m\ddot x_d = mg e_3 - fRe_3 -m\ddot x_d+\mathds{W}_{x}\theta_{x}. \label{eqn:evdot0}
\end{align}
Consider the quantity $e_3^T R_c^T R e_3$, which represents the cosine of the angle between $b_3=Re_3$ and $b_{3_c}=R_ce_3$. Since $1-\Psi(R,R_c)$ represents the cosine of the eigen-axis rotation angle between $R_c$ and $R$, we have $e_3^T R_c^T R e_3\geq  1-\Psi(R,R_c)>0$ in $D_1$. Therefore, the quantity $\frac{1}{e_3^T R_c^T R e_3}$ is well-defined. To rewrite the error dynamics of $e_v$ in terms of the attitude error $e_R$, we add and subtract $\frac{f}{e_3^T R_c^T R e_3}R_c e_3$ to the right hand side of \refeqn{evdot0} to obtain
\begin{align}
m\dot e_v &  = mg e_3 -m\ddot x_d- \frac{f}{e_3^T R_c^T R e_3}R_c e_3 - X+\mathds{W}_{x}\theta_{x},\label{eqn:evdot1}
\end{align}
where $X\in\Re^3$ is defined by
\begin{align}
X=\frac{f}{e_3^T R_c^T R e_3}( (e_3^T R_c^T R e_3)R e_3 -R_ce_3).\label{eqn:X}
\end{align}
Let $A=-k_x e_x - k_v e_v -\mathds{W}_{x}\bar{\theta}_{x}-mg e_3 + m\ddot x_d$.
Then, from \refeqn{Rd3}, \refeqn{f}, we have ${b}_{3_c}=R_c e_3 = -A/\norm{A}$ and $f=-A\cdot Re_3$. By combining these, we obtain $f= (\norm{A}R_c e_3)\cdot R e_3$. Therefore, the third term of the right hand side of \refeqn{evdot1} can be written as
\begin{align*}
-  \frac{f}{e_3^T R_c^T R e_3} & R_c e_3 = -\frac{(\norm{A}R_c e_3)\cdot R e_3}{e_3^T R_c^T R e_3}\cdot - \frac{A}{\norm{A}}=A\\
& =-k_x e_x - k_v e_v -\mathds{W}_{x}\bar{\theta}_{x} -mg e_3 + m\ddot x_d.
\end{align*}
Substituting this into \refeqn{evdot1}, the error dynamics of $e_v$ can be written as
\begin{align}
m\dot e_v  = & -k_x e_x - k_v e_v -\mathds{W}_{x}\bar{\theta}_{x} - X+\mathds{W}_{x}{\theta}_{x}\nonumber \\
&=  -k_x e_x - k_v e_v +\mathds{W}_{x}\tilde{\theta}_{x} - X.\label{eqn:evdot}
\end{align}
where $\tilde{\theta}_{x}=\theta_{x}-\bar{\theta}_{x}$ is the estimation errors.

\subsection{Lyapunov Candidate for Translation Dynamics}
Let a Lyapunov candidate $\mathcal{V}_1$ be
\begin{align}
\mathcal{V}_1 & = \frac{1}{2}k_x\|e_x\|^2  + \frac{1}{2} m \|e_v\|^2 + c_1 e_x\cdot me_v+ \frac{1}{2\gamma_{x}}\|\tilde{\theta}_{x}\|^2
\label{eqn:V1}.
\end{align}
The derivative of ${\mathcal{V}}_1$ along the solution of \refeqn{evdot} is given by
\begin{align}
\dot{\mathcal{V}}_1 
 =&  -(k_v-mc_1) \|e_v\|^2 
- c_1 k_x \|e_x\|^2 \nonumber\\
&-c_1 k_v e_x\cdot e_v+X\cdot \braces{ c_1 e_x + e_v}\nonumber\\
&+\mathds{W}_{x}\tilde{\theta}_{x}\cdot\{e_{v}+c_{1}e_{x}\}-\frac{1}{\gamma_{x}}\tilde{\theta}_{x}^{T}\dot{\bar{\theta}}_{x}.\label{eqn:V1dot03}
\end{align}
For the first case of the adaptive law given by \refeqn{adaptivelawx}, the last two terms of \refeqn{V1dot03} are cancelled out. Also for the second case~\cite{Ioannou96} that $\dot{\bar{\theta}}_{x}=\gamma_{x}(I-\frac{\bar{\theta}_{x}\bar{\theta}_{x}^{T}}{\bar{\theta}_{x}^{T}\bar{\theta}_{x}})\mathds{W}_{x}^{T}(e_{v}+c_{1}e_{x})$, we obtain
\begin{align}
\dot{\mathcal{V}}_1 
 =&  -(k_v-mc_1) \|e_v\|^2 
- c_1 k_x \|e_x\|^2 \nonumber\\
&-c_1 k_v e_x\cdot e_v+X\cdot \braces{ c_1 e_x + e_v}\nonumber\\
&+\tilde{\theta}_{x}^{T}\frac{\bar{\theta}_{x}\bar{\theta}_{x}^{T}}{\bar{\theta}_{x}^{T}\bar{\theta}_{x}}W_{x}^{T}(e_{v}+c_{1}e_{x}).
\end{align}
In the above equation, the last term on the right hand side is always negative since $\bar{\theta}_{x}^{T}\mathds{W}_{x}^T(e_{v}+c_{1}e_{x})>0$ and $\tilde{\theta}_{x}^T\bar{\theta}_{x}\leq 0$. Therefore for both cases of \refeqn{adaptivelawx}, we obtain
\begin{align}\label{eqn:V1dot0}
\dot{\mathcal{V}}_1 
 \leq&  -(k_v-mc_1) \|e_v\|^2 
- c_1 k_x \|e_x\|^2 \nonumber\\
&-c_1 k_v e_x\cdot e_v+X\cdot \braces{ c_1 e_x + e_v}.
\end{align}
The last term of the above equation corresponds to the effects of the attitude tracking error on the translational dynamics. We find a bound of $X$, defined at \refeqn{X}, to show stability of the coupled translational dynamics and rotational dynamics in the subsequent Lyapunov analysis. Since $f=\|A\| (e_3^T R_c^T R e_3)$, we have
\begin{align*}
\norm{X}  \leq& \|A\|\,\| (e_3^T R_c^T R e_3)R e_3 -R_ce_3\|\\
 \leq&( k_x \|e_x\| + k_v \|e_v\| + B_{W_{x}} B_{\theta} +B_1) \\
&\times \| (e_3^T R_c^T R e_3)R e_3 -R_ce_3\|.
\end{align*}
The last term $\| (e_3^T R_c^T R e_3)R e_3 -R_ce_3\|$ represents the sine of the angle between $b_3=Re_3$ and $b_{c_3}=R_c e_3$, since $(b_{3_c}\cdot b_3)b_3 - b_{3_c} = b_{3}\times (b_3\times b_{3_c})$. The magnitude of the attitude error vector, $\|e_R\|$ represents the sine of the eigen-axis rotation angle between $R_c$ and $R$ (see \cite{LeeLeoPICDC10}). Therefore, $\| (e_3^T R_c^T R e_3)R e_3 -R_ce_3\| \leq \| e_R\|$ in $D_1$. It follows that 
\begin{align}
\| (e_3^T R_d^T R e_3)R e_3 & -R_de_3\| \leq \| e_R\| = \sqrt{\Psi(2-\Psi)}\nonumber\\
& \leq \braces{\sqrt{\psi_1 (2-\psi_1)}\triangleq\alpha}  <1.\label{eqn:eR_bound}
\end{align}
Therefore, $X$ is bounded by
\begin{align}
\norm{X} 
&\leq ( k_x \|e_x\| + k_v \|e_v\| + B_{W_{x}} B_{\theta} + B_1) \|e_R\| \nonumber\\
&\leq ( k_x \|e_x\| + k_v \|e_v\| + B_{W_{x}} B_{\theta} + B_1) \alpha.\label{eqn:XB}
\end{align}
Substituting \refeqn{XB} into \refeqn{V1dot0}, 
\begin{align}
\dot{\mathcal{V}}_1 
& \leq   -(k_v(1-\alpha)-mc_1) \|e_v\|^2 
- {c_1 k_x}(1-\alpha) \|e_x\|^2 \nonumber\\
& + {c_1k_v}(1+\alpha) \|e_x\|\|e_v\|\nonumber\\
& +  \|e_R\| \braces{(B_{W_{x}} B_{\theta} +B_1)({c_1} \|e_x\| + \|e_v\|)+k_x\|e_x\|\|e_v\|}.\label{eqn:V1dot1}
\end{align}
In the above expression for $\dot{\mathcal{V}}_1$, there is a third-order error term, namely $k_x\|e_R\|\|e_x\|\|e_v\|$. Using \refeqn{eR_bound}, it is possible to choose its upper bound as $k_x\alpha\|e_x\|\|e_v\|$ similar to other terms, but the corresponding stability analysis becomes complicated, and the initial attitude error should be reduced further. Instead, we restrict our analysis to the domain $D_1$ defined in \refeqn{D}, and its upper bound is chosen as $k_xe_{x_{\max}}\|e_R\|\|e_v\|$.
\subsection{Lyapunov Candidate for the Complete System}
Let $\mathcal{V}=\mathcal{V}_1+\mathcal{V}_2$ be the Lyapunov candidate of the complete system. Define $z_1=[\|e_x\|,\;\|e_v\|]^T$, $z_2=[\|e_R\|,\;\|e_\Omega\|]^T\in\Re^2$, and 
\begin{align*}
\mathcal{V}_A = \frac{1}{2\gamma_{x}}\|\theta_{x}-\bar{\theta}_{x}\|^2+\frac{1}{2\gamma_{R}}\|\theta_{R}-\bar{\theta}_{R}\|^2.
\end{align*}
Using \refeqn{eRPsi1}, the bound of the Lyapunov candidate $\mathcal{V}$ can be written as
\begin{align}
z_1^T M_{11} z_1 + z_2^T M_{21} z_2& + \mathcal{V}_A
 \leq \mathcal{V} \leq z_1^T M_{12} z_1 + z_2^T M'_{22} z_2 +\mathcal{V}_A,\label{eqn:Vb}
\end{align}
where the matrices $M_{11},M_{12},M_{21},M_{22}$ are given by
\begin{gather*}
M_{11} = \frac{1}{2}\begin{bmatrix} k_x & -mc_1 \\ -mc_1 & m\end{bmatrix},\;
M_{12} = \frac{1}{2}\begin{bmatrix} k_x & mc_1 \\ mc_1 & m\end{bmatrix},\\
M_{21} = \frac{1}{2}\begin{bmatrix} k_R & -c_2\lambda_M \\ -c_2\lambda_M & \lambda_{m}  \end{bmatrix},\;
M_{22} = \frac{1}{2}\begin{bmatrix} \frac{2k_R}{2-\psi_1} & c_2\lambda_M \\ c_2\lambda_M & \lambda_{M}\end{bmatrix}.
\end{gather*}
Using \refeqn{dotV2} and \refeqn{V1dot1}, the time-derivative of $\mathcal{V}$ is given by
\begin{align}
\dot{\mathcal{V}} & \leq -z_1^T W_1 z_1  + z_1^T W_{12} z_2 - z_2^T W_2 z_2\leq -z^T W z
\end{align}
where $z=[z_1,z_2]^T\in\Re^2$, and the matrices $W_1,W_{12},W_2\in\Re^{2\times 2}$ are defined at \refeqn{W1}-\refeqn{W2}. The matrix $W\in\Re^{2\times 2}$ is given by
\begin{align*}
W=\begin{bmatrix}
\lambda_{m}(W_1) & -\frac{1}{2}\|W_{12}\|_2\\
-\frac{1}{2}\|W_{12}\|_2 & \lambda_m(W_2)
\end{bmatrix}.
\end{align*}
The conditions given at \refeqn{c2}, \refeqn{c1b}, \refeqn{kRkWb} guarantee that all of matrices $M_{11},M_{12},M_{21},M_{22},W$ are positive definite. 

\section{Proof of Proposition \ref{prop:Pos2}}\label{sec:pfPos2}

According to the proof of Proposition \ref{prop:Att}, the attitude tracking errors asymptotically decrease to zero, and therefore, they enter the region given by \refeqn{Psi0} in a finite time $t^*$, after which the results of Proposition \ref{prop:Pos} can be applied to yield attractiveness. The remaining part of the proof is showing that the tracking error $z_1=[\|e_x\|,\|e_v\|]^T$ is bounded in $t\in[0, t^*]$. This is similar to the proof given at~\cite{LeeLeoAJC13}.

\end{document}